\title{On bounded energy of convolution of fractal measures}
\author[G. Yi]{Guangzeng Yi}
\address{$\ast$Department of Mathematics and Statistics\\ University of Jyv\"askyl\"a,
	P.O. Box 35 (MaD)\\
	FI-40014 University of Jyv\"askyl\"a\\
	Finland}
\email{guangzeng.m.yi@jyu.fi}
\date{\today}
\subjclass[2020]{28A80 (primary) 05B99, 05D99, 51A20 (secondary)}
\keywords{Incidences, Riesz energy, Fourier decay}
\thanks{G.Y. is supported by the European Research Council (ERC) under the European Union’s Horizon Europe research and innovation programme (grant agreement No 101087499). }
\newcommand{\R}{\mathbb{R}}
\newcommand{\N}{\mathbb{N}}
\def\Barint_#1{\mathchoice
	{\mathop{\vrule width 6pt height 3 pt depth -2.5pt
			\kern -8pt \intop}\nolimits_{#1}}%
	{\mathop{\vrule width 5pt height 3 pt depth -2.6pt
			\kern -6pt \intop}\nolimits_{#1}}%
	{\mathop{\vrule width 5pt height 3 pt depth -2.6pt
			\kern -6pt \intop}\nolimits_{#1}}%
	{\mathop{\vrule width 5pt height 3 pt depth -2.6pt
			\kern -6pt \intop}\nolimits_{#1}}}
\numberwithin{equation}{section}
\theoremstyle{plain}
\newtheorem{thm}[equation]{Theorem}
\newtheorem*{"thm"}{"Theorem"}
\newtheorem{conjecture}[equation]{Conjecture}
\newtheorem{lemma}[equation]{Lemma}
\newtheorem{sublemma}[equation]{Sublemma}
\newtheorem{ex}[equation]{Example}
\newtheorem{cor}[equation]{Corollary}
\newtheorem{proposition}[equation]{Proposition}
\newtheorem{question}{Question}
\theoremstyle{definition}
\newtheorem{definition}[equation]{Definition}
\theoremstyle{remark}
\newtheorem{remark}[equation]{Remark}
\newcommand{\nref}[1]{(\hyperref[#1]{#1})}
\DeclareMathSymbol{\intop} {\mathop}{mathx}{"B3}
\begin{document}
	
	\begin{abstract}
		For all $s\in[0,1]$ and $t\in(0,s]\cup [2-s,2)$, we find the supremum of numbers $\omega\in(0,2)$ such that
		\[\textup{I}_\omega(\mu\ast\sigma) \lesssim 1,\]
		where $\mu$ is any Borel measure on $B(1)$ with $\textup{I}_t(\mu)\leq 1$ and $\sigma$ is any $(s,1)$-Frostman measure on a $C^2$-graph with non-zero curvature. As an application, we use this to show the sharp $L^6$-decay of Fourier transform of $\sigma$ when $s\in [\frac{2}{3}, 1]$.
	\end{abstract}
	
	\maketitle
	\setcounter{tocdepth}{1}
	\tableofcontents
	
	\section{Introduction}
	Assume that $\psi\in C^2(\mathbb R)$ satisfies the positive curvature condition
	\begin{equation}\label{curvature assumption}
		\psi''(x)>0,\quad x\in\mathbb R.
	\end{equation}  
	Let $\Gamma$ be the truncated graph of $\psi$ on $[-1,1]$. For $0<\omega<d$, let $\textup{I}_\omega(\mu)$ be the $\omega$-dimensional Riesz energy of a measure $\mu$, which is defined as
	\begin{equation}\label{Riesz}
		\textup{I}_\omega(\mu):=\int_{\mathbb R^{2d}} \frac{d\mu (x) d\mu (y)}{|x-y|^\omega}=c(\omega,d)\int_{\mathbb R^d} |\hat{\mu}(\xi)|^2\cdot|\xi|^{\omega-d} d\xi.
	\end{equation}
	See \cite[Theorem 3.10]{MR3617376} for the proof of the second identity. We also recall the definition of Frostman measures.
	\begin{definition}
		Let $u\in[0,2]$ and $C_\mu\geq 1$. A Borel measure $\mu$ on $\mathbb R^2$ is called a $(u,C_\mu)$-Frostman measure if $\mu(B(x,r))\leq C_\mu r^u$ for all $x\in \mathbb R^2$ and $r>0$. Here $C_\mu$ is also called the Frostman constant of $\mu$.
	\end{definition}
	
	Of concern is the following problem.
	\begin{question}\label{qt}
		Given $s\in[0,1]$ and $t\in(0,2)$, let $\mu$ be a Borel measure on $B(1)$ such that $\textup{I}_t(\mu)\leq1$ and let $\sigma$ be an $(s, 1)$-Frostman measure on $\Gamma$. What is the supremum $\mathfrak{f}(s,t)$ of $\omega\in(0,2)$ such that $\textup{I}_{\omega}(\mu\ast\sigma)\lesssim_{\psi,s,t,\omega} 1$ for all such $\mu$ and $\sigma$? 
	\end{question}
	
	The main result in this paper answers Question \ref{qt} partially. As one can see from \eqref{form-main}, the cases $t\in [s,3s], s\in(0,\frac{1}{2}]$ and $t\in [s, 2-s], s\in [\frac{1}{2},1]$ remain unsolved.
	\begin{thm}\label{main}
		For $s\in[0,1]$ and $t\in(0,2)$, we have 
		\begin{equation}\label{form-main}
			\mathfrak{f}(s,t)=
			\begin{cases}
				s+t, & \text{when} ~~ t\in(0,s],~s\in (0, 1],\\
				s+1, & \text{when}~~ t\in [2-s, s+1],~s\in [\frac{1}{2}, 1],\\
				t,   & \text{when}~~ t\in [3s,s+1],~s\in [0, \frac{1}{2}],\\
				t,   & \text{when}~~ t\in [s+1,2),~s\in [0, 1).
			\end{cases}
		\end{equation}
		Moreover, if $\textup{I}_t(\mu)<+\infty$ and $\sigma$ is an $(s,C_\sigma)$-Frostman measure on $\Gamma$, then for $\mathfrak{f}(s,t)=s+t$ or $\mathfrak{f}(s,t)=s+1$, there exists a constant $C=C(\psi, s,t,\epsilon)>0$ such that
		\begin{equation}\label{form-main2}
			\textup{I}_{\mathfrak{f}(s,t)-\epsilon}(\mu\ast\sigma)\leq C \big(\max\{\textup{I}_t(\mu), 1\}\big)\cdot C_\sigma^2,\quad \forall \epsilon\in(0,1).
		\end{equation}
	\end{thm}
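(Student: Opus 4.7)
Use the Fourier identity in \eqref{Riesz},
\[
\textup{I}_\omega(\mu\ast\sigma)\;=\;c(\omega,2)\int_{\mathbb R^2}|\hat\mu(\xi)|^2\,|\hat\sigma(\xi)|^2\,|\xi|^{\omega-2}\,d\xi,
\]
reducing the problem to a weighted $L^2$ pairing of $|\hat\mu|^2$ and $|\hat\sigma|^2$; after a dyadic decomposition $|\xi|\sim 2^k$, the four regimes of \eqref{form-main} can be treated separately.

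The regimes with $\mathfrak{f}(s,t)=t$ (Cases 3 and 4) are immediate from the trivial pointwise bound $\|\hat\sigma\|_\infty\leq\sigma(\mathbb R^2)\lesssim C_\sigma$, giving $\textup{I}_\omega(\mu\ast\sigma)\lesssim C_\sigma^2\textup{I}_\omega(\mu)<\infty$ for every $\omega<t$; sharpness follows by letting $\sigma$ concentrate on a short sub-arc of $\Gamma$ so that $\mu\ast\sigma$ is essentially a translate of $\mu$.

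The product regime $\mathfrak{f}(s,t)=s+t$ (Case 1, $t\leq s$) is the technical core. The obvious tools — the Frostman $t$-energy of $\mu$ (dyadically, $\int_{|\xi|\sim R}|\hat\mu|^2 d\xi \lesssim R^{2-t+\epsilon}\textup{I}_t(\mu)$) and Mattila's sphere-average bound $\int_{|\xi|=R}|\hat\sigma|^2 d\mathcal H^1\lesssim C_\sigma^2 R^{1-s}$ for $(s,1)$-Frostman measures on curves with positive curvature — used through either an $L^\infty$--$L^1$ pairing or a blunt Cauchy--Schwarz only yield $\mathfrak{f}\geq\max(s,t)$ or $\mathfrak{f}\geq(s+t)/2$, and neither reaches $s+t$. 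To close this gap and obtain the quantitative form \eqref{form-main2}, I would invoke either an incidence estimate between $\mu$-heavy $R^{-1}$-discs and $R^{-1}$-tube neighbourhoods of $\sigma$-translates of $\Gamma$, or a refined $L^2$-bilinear Fourier inequality tailored to the curved graph $\Gamma$; the hypothesis $t\leq s$ is what enables the resulting dyadic sum to close at the exponent $s+t$. Sharpness is witnessed by $\mu$ supported on a $t$-dimensional Frostman subset of $\Gamma$ (admissible since $t\leq s\leq 1$), forcing $\mu\ast\sigma\subset\Gamma+\Gamma$ to carry a measure of dimension at most $s+t$.

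The regime $\mathfrak{f}(s,t)=s+1$ (Case 2, $s\geq 1/2$, $t\in[2-s,s+1]$) is formally a corollary of Case 1 for the energy upper bound: since $s+1\leq s+t$ in this range, the quantitative bound in \eqref{form-main2} at $\omega=s+1-\epsilon$ follows from the Case 1 estimate via the trivial inequality $\textup{I}_\omega\leq\textup{I}_{\omega'}+\|\cdot\|^2$ for $\omega<\omega'$. The nontrivial Case 2 content is therefore the sharpness $\mathfrak{f}(s,t)\leq s+1$, i.e.\ constructing $(\mu,\sigma)$ for which $\textup{I}_\omega(\mu\ast\sigma)=+\infty$ at some $\omega>s+1$, requiring a curvature-sensitive extremal construction, plausibly a Salem/Kahane-type random $\mu$ tuned to the curvature scales of $\Gamma$.

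\textbf{Main obstacle.} The hardest step is the Case 1 upper bound: standard Cauchy--Schwarz and sphere-average inputs give at most $\mathfrak{f}\geq\max(s,t)$ or $(s+t)/2$, so reaching the sharp exponent $s+t$ with the clean dependence $\textup{I}_t(\mu)\cdot C_\sigma^2$ demands a genuinely new Fourier-analytic or incidence-theoretic ingredient. The Case 2 sharpness is a secondary challenge and requires an explicit extremal family tailored to the curvature of $\Gamma$.
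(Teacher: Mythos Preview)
Your reduction of Case 2 to Case 1 is a genuine gap. You write that the bound $\mathfrak{f}(s,t)\geq s+1$ ``follows from the Case 1 estimate'' because $s+1\leq s+t$, but the Case 1 estimate was only established for $t\leq s$, while in Case 2 one has $t\geq 2-s\geq 1\geq s$ (with equality only at $s=1$). Monotonicity of $\mathfrak{f}$ in $t$ buys you at most $\mathfrak{f}(s,t)\geq\mathfrak{f}(s,s)\geq 2s$, which falls short of $s+1$ whenever $s<1$. The paper handles Case 2 by a \emph{separate} $L^2$-smoothing bound $\zeta(s,t)=2s+t-1$ valid for all $t\leq 2-s$: this comes from a Sobolev estimate for the curved convolution operator $f\mapsto f\ast(\mathcal{H}^1|_\Gamma)$ (Lemma~\ref{Sobolev1} and Corollary~\ref{Sobolev2}), which yields the $\delta$-incidence inequality of Theorem~\ref{wei} and then the Fu--Ren type incidence Theorem~\ref{thm-incidence1}. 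Evaluated at the endpoint $t=2-s$ this gives $\zeta=s+1$, and larger $t$ is then handled by lowering the energy exponent (Remark~\ref{rmk-reduction}). Your proposed incidence route for Case 1 is correct in spirit and matches the paper's Proposition~\ref{pro-incidence2}, but it cannot be recycled to cover Case 2; a second, Fourier-analytic incidence ingredient is required.

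Your sharpness arguments are also too optimistic. In Cases 3--4, ``letting $\sigma$ concentrate on a short sub-arc'' either violates the $(s,1)$-Frostman constraint or, if you keep the constraint by shrinking the total mass, forces $\textup{I}_\omega(\mu\ast\sigma)\to 0$ rather than $\infty$; the paper instead builds explicit arithmetic-progression examples (Example~\ref{ex-3}) and invokes K\"orner's sets with $\dim_{\mathrm H}(A+A+A)=\dim_{\mathrm H}(A+A)$ (Example~\ref{ex-4}). For Case 2 sharpness no random Salem construction is needed: Example~\ref{ex-2} uses a deterministic set $B$ with $\dim_{\mathrm H}(B+B)=\dim_{\mathrm H} B$ to trap $\mathrm{spt}(\mu)+\mathrm{spt}(\sigma)$ inside a set of dimension $s+1+\epsilon$.
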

	The following examples show that $\mathfrak{f}(s,t)$ cannot exceed those constants in \eqref{form-main}. Also, since we have the trivial estimate $\textup{I}_t(\mu\ast\sigma)\lesssim \textup{I}_t(\mu)$, the threshold "$t$" will be proven once the examples have been found. Therefore, the main part of this paper will be intended to prove the inequalities
	\begin{equation}\label{form-main3}
		\mathfrak{f}(s,t)\geq
		\begin{cases}
			s+t, & \text{when} ~~ t\in(0,s],~s\in (0, 1],\\
			s+1, & \text{when}~~ t\in [2-s, s+1],~s\in [\frac{1}{2}, 1].
		\end{cases}
	\end{equation}
	
	\subsection{Examples}\label{sub-1.1} 
	In the sequel, "$\textup{dim}_\text{H}$" denotes Hausdorff dimension and "$\textup{dim}_\text{B}$" denotes box counting dimension. We will use the basic fact for Borel sets $A\subset\mathbb R^2$:
	\[\textup{dim}_\text{H}(A)=\sup\{v: \textup{there is~} \mu\in\mathcal{M}(A) \text{~such that~} \textup{I}_v(\mu)<+\infty\},\]
	where $\mathcal{M}(A)$ is the family of positive finite Borel measures with compact support in $A$. The proof of this identity can be found in \cite[Theorem 2.8]{MR3617376}. 
	
	\begin{ex}[Case $\mathbf{t\in(0, s]}$ and $\mathbf{s\in(0, 1]}$]\label{ex-1} Let $\epsilon\in (0,1)$. Choose a measure $\mu$ on $B(1)$ with
		\[\mu(B(x, r))\sim r^{t+\epsilon},\quad x\in \textup{spt}(\mu),~r>0.\]
		Hence $\textup{I}_t(\mu)\lesssim 1$. Next, choose $\sigma$ on $\Gamma$ such that $\sigma(B(x, r))\sim r^{s}$ for any $x\in\textup{spt}(\sigma)$ and $r>0$. Since both $\mu$ and $\sigma$ are Ahlfors-regular, $\textup{dim}_{B}(spt(\sigma))=s$ and $\textup{dim}_{B}(spt(\mu))=t+\epsilon$, see \cite[Theorem 5.7]{MR1333890}. If ~$\textup{I}_\omega(\mu\ast\sigma)\lesssim_{\psi,s,t,\omega}1$, we deduce
		\[\begin{split}
			\omega&\leq \textup{dim}_\text{H}(spt(\mu\ast\sigma))\leq \textup{dim}_\text{H}(spt(\mu)+spt(\sigma))\\
			&\leq  \textup{dim}_{B}(spt(\sigma))+\textup{dim}_{B}(spt(\mu))=s+t+\epsilon.
		\end{split}\]
		By sending $\epsilon\to 0$, we conclude that the supremum $\mathfrak{f}(s,t)\leq s+t$.
	\end{ex}
	
	\begin{ex}[Case $\mathbf{t\in[2-s, s+1]}$ and $\mathbf{s\in[\tfrac{1}{2}, 1]}$]\label{ex-2} 
		We may assume $s<1$ since otherwise $s+1=2\geq \mathfrak{f}(s,t)$ is obvious. Let $A\subset [0,1]$ be an arbitrary $(t-s)$-dimensional subset. Next, by \cite[Example 7]{MR2742987}, for $\epsilon>0$ with $s+\epsilon<1$ we can find a compact subset $B\subset [0,1]$ such that
		\[\textup{dim}_{B}(B)=\textup{dim}_\text{H}(B)=\textup{dim}_\text{H}(B+B)=s+\epsilon.\]
		Then $\textup{dim}_\text{H}(A\times B)=t+\epsilon$ and $\textup{dim}_\text{H}(\Gamma\cap (B\times \mathbb R))=s+\epsilon$. Let $\mu$ be a finite Borel measure supported on $(A\times B)$ with $\textup{I}_t(\mu) \leq1$ and let $\sigma$ be an $(s,1)$-Frostman measure supported on $\Gamma\cap (B\times \mathbb R)$. If ~$\textup{I}_\omega(\mu\ast\sigma)\lesssim_{\psi,s,t,\omega}1$, we can deduce
		\[\begin{split}
			\omega&\leq \textup{dim}_\text{H}(spt(\mu\ast\sigma))\leq \textup{dim}_\text{H}(spt(\mu)+spt(\sigma))\\
			&\leq \textup{dim}_\text{H}((B+B)\times(A+\mathbb R))=s+1+\epsilon.
		\end{split}\]
		By sending $\epsilon \to 0$, we conclude that $\mathfrak{f}(s,t)\leq s+1$.
	\end{ex}
	
	\begin{ex}[Case $\mathbf{t\in[3s, s+1]}$ and $\mathbf{s\in [0,\tfrac{1}{2}]}$]\label{ex-3}
		Fix $t\in [3s,s+1)$ and $s\in[0,\tfrac{1}{2}]$ and note the case $t=s+1$ is contained in Example \ref{ex-4}. Let $\tau\in(t, \tfrac{3}{2}]$. We also choose $\psi(x)=x^2$ and write $G(x)=(x,x^2)$.
		
		\textbf{Step 1: constructing supporting sets.} Let $A=[0,1]\cap (\delta^{\frac{\tau}{3}}\mathbb Z)$, then the $\delta$-neighborhood of $A$ denoted by $A(\delta)$ is a union of $\sim \delta^{-\frac{\tau}{3}}$ equally spaced open intervals of length $2\delta>0$ with center points in arithmetic progression. We will need the explicit form: 
		\[A(\delta)=\{I_k:1\leq k \leq n,~n\sim \delta^{-\frac{\tau}{3}}\},~~\text{where}~~ I_k=(k\delta^{\frac{\tau}{3}}-\delta,k\delta^{\frac{\tau}{3}}+\delta).\]
		Since $\tau\leq \tfrac{3}{2}$, we similarly define $B=[0,1]\cap (\delta^{\frac{2\tau}{3}}\mathbb Z)$ and $B(\delta)=\{J_k:1\leq k \leq m,~m\sim \delta^{-\tfrac{2\tau}{3}}\}$, where $J_k=(k\delta^{\frac{2\tau}{3}}-\delta,k\delta^{\frac{2\tau}{3}}+\delta)$. A first observation is that
		\begin{equation}\label{form-observation}
			|A(\delta)+A(\delta)|_{\delta}\sim \delta^{-\tfrac{\tau}{3}},~~|B(\delta)+B(\delta)|_{\delta}\sim \delta^{-\tfrac{2\tau}{3}}~~\text{and}~~|A(\delta)\times B(\delta)|_{\delta}\sim \delta^{-\tau}.
		\end{equation}  
		Since $s<\frac{\tau}{3}$, we may assume that $\delta^s=j\cdot\delta^{\frac{\tau}{3}}+\bar{\delta}$ with $0\leq\bar{\delta}<\delta^{\frac{\tau}{3}}< \delta^s\leq1$, where $j=j(s,\tau)\geq1$ is an integer. Hence we can choose $D\subset A$ to be
		\[D:=\{{kj}\delta^{\tfrac{\tau}{3}}, 1\leq k\leq m\},~~~\text{where}~~m\sim \frac{\delta^{-\frac{\tau}{3}}}{j} = \frac{1}{\delta^s-\bar{\delta}}\sim \delta^{-s}.\]
		Then $G(D(\delta))\subset A(\delta)\times \psi(A(\delta))$ and $|G(D(\delta))|_\delta \sim \delta^{-s}$. So far we have finished the construction of supporting sets, namely, we will construct $\mu$ and $\sigma$ such that $\text{spt}(\mu)\subset A(\delta)\times B(\delta)$ and $\text{spt}(\sigma)\subset G(D(\delta))$.
		Before that, let us show the covering estimate
		\begin{equation}\label{form-coverofsum}
			\left|A(\delta)\times B(\delta)+G(D(\delta))\right|_\delta \lesssim \delta^{-\tau}.
		\end{equation}
		To show \eqref{form-coverofsum}, we first prove that
		\begin{equation}\label{form-covernumber}
			|\psi(A(\delta))+B(\delta)|_\delta \sim \delta^{-\tfrac{2\tau}{3}}.
		\end{equation}
		Since $k\lesssim \delta^{-\tfrac{\tau}{3}}$, we observe that
		\[\psi(I_k)=\left(k^2\delta^{\tfrac{2\tau}{3}}+\delta^2-2k\delta^{\tfrac{\tau}{3}+1}, k^2\delta^{\tfrac{2\tau}{3}}+\delta^2+2k\delta^{\tfrac{\tau}{3}+1}\right)\subset \left(k^2\delta^{\tfrac{2\tau}{3}}-3\delta, k^2\delta^{\tfrac{2\tau}{3}}+3\delta\right),\]
		which means $\psi(A(\delta))\subset B(3\delta)$. Since $B$ is an arithmetic progression, we easily get
		\[\delta^{-\tfrac{2\tau}{3}}\lesssim|\psi(A(\delta))+B(\delta)|_{3\delta} \leq |B(3\delta)+B(3\delta)|_{3\delta}\sim \delta^{-\tfrac{2\tau}{3}},\]
		which implies \eqref{form-covernumber}. Now \eqref{form-coverofsum} follows easily by applying \eqref{form-observation} and \eqref{form-covernumber}:
		\begin{equation*}
			\left|A(\delta)\times B(\delta)+G(D(\delta))\right|_\delta \leq \left|(A(\delta)+A(\delta))\times (\psi(A(\delta))+B(\delta))\right|_\delta \lesssim \delta^{-\tau}.
		\end{equation*}
		
		\textbf{Step 2: constructing suitable measures.} Define measures
		\[\mu:=\frac{\mathcal{L}^2|_{A(\delta)\times B(\delta)}}{\mathcal{L}^2(A(\delta)\times B(\delta))}~~\text{and}~~\sigma:=G_{\text{\#}}\left(\frac{\mathcal{L}^1|_{D(\delta)}}{\mathcal{L}^1(D(\delta))}\right),\]
		where $\mu$ is the normalized Lebesgue measure on $A(\delta)\times B(\delta)$ and $\sigma$ is the pushforward of normalized Lebesgue measure on $D(\delta)$. Since both $A$ and $B$ are arithmetic progressions, we can verify that $\mu$ is a $(\tau, c)$-Frostman measure and $\sigma$ is an $(s,c)$-Frostman measure for some absolute constant $c\geq1$. Indeed, for $\delta^{\tfrac{\tau}{3}}\leq r\leq 1$ and $x\in\mathbb R^2$,
		\[\mu(B(x,r))\sim \delta^{\tau-2}\mathcal{L}^2 (A(\delta)\times B(\delta)\cap B(x,r))\lesssim \delta^{\tau-2} \cdot \frac{r^2}{\delta^\tau}\cdot \delta^2\leq r^\tau.\]
		For $r>1$ and $x\in\mathbb R^2$, we simply have $\mu(B(x,r))\leq 1\leq r^\tau$. For $\delta\leq r\leq\delta^{\tfrac{\tau}{3}}$ and $x\in\mathbb R^2$, $B(x,r)$ intersects $\lesssim1$ cubes in $A(\delta)\times B(\delta)$, thus $\mu(B(x,r))\lesssim \delta^{\tau-2}\cdot \delta^2\leq r^\tau$. For $0<r<\delta$ and $x\in\mathbb R^2$, there holds
		\[\mu(B(x,r))\lesssim \delta^{\tau-2}\cdot r^2 =r^\tau\cdot\left(\frac{r}{\delta}\right)^{2-\tau}\leq r^\tau.\]
		Combining the four cases above, we see that $\mu$ is $(\tau, c)$-Frostman and also $\textup{I}_{t}(\mu)\leq c_1$ for some $c_1=c_1(t,\tau)>0$ since $t<\tau$. The same argument shows that $\sigma$ is $(s,c)$-Frostman. Now if we define $\bar{\mu}:=\tfrac{1}{\sqrt{c_1}}\mu$ and $\bar{\sigma}:=\tfrac{1}{c}\sigma$, then it is easy to verify that $\bar{\mu}$ and $\bar{\sigma}$ satisfy the assumptions in Question \ref{qt} with parameters $(t, s)$. 
		
		We claim that $\textup{I}_{\tau+\epsilon}(\bar{\mu}\ast\bar{\sigma})\lesssim_{\psi, s, t,\tau, \epsilon} 1$ cannot be true for any $\epsilon>0$. Otherwise, $\textup{I}_{\tau+\epsilon}(\bar{\mu}\ast\bar{\sigma})\lesssim_{\psi, s,t,\tau,\epsilon} 1$ implies $|spt(\bar{\mu}\ast\bar{\sigma})|_\delta \gtrsim_{\psi, s, t,\tau, \epsilon} \delta^{-\tau-\epsilon}$ (see \cite[Lemma 1.4]{MR4528124}), which contradicts \eqref{form-coverofsum} if $\delta$ is small enough, noting that $spt(\bar{\mu})\subset A(\delta)\times B(\delta)$ and $spt(\bar{\sigma})\subset G(D(\delta))$. By the arbitrariness of $\epsilon$, we obtain $\mathfrak{f}(s,t)\leq \tau$. Letting $\tau\to t$, we get $\mathfrak{f}(s,t)\leq t$. Based on the fact $\textup{I}_{t}(\bar{\mu}\ast\bar{\sigma})\lesssim 1$, we conclude $\mathfrak{f}(s,t)=t$ when $t\in [3s,s+1)$ and $s\in[0,\tfrac{1}{2}]$.
	\end{ex}
	
	\begin{ex}[Case $\mathbf{t\in[s+1, 2)}$ and $\mathbf{s\in[0, 1)}$]\label{ex-4} 
		Fix $s\leq t-1<1$. Find a compact set $A\subset [0,1]$ such that $\textup{dim}_\text{H}(A)=s$ and $\textup{dim}_\text{H}(A+A)=\textup{dim}_\text{H}(A+A+A)=t-1+\epsilon$, see \cite[Example 4]{MR2742987} for the construction of such set. Then we have
		\[\textup{dim}_\text{H}((A+A)\times \mathbb R)=t+\epsilon, \quad \textup{dim}_\text{H}(\Gamma\cap (A\times\mathbb R))=s,\]
		which implies
		\[\textup{dim}_\text{H}\big((A+A)\times \mathbb R+ \Gamma\cap (A\times\mathbb R)\big)\leq \textup{dim}_\text{H}((A+A+A)\times \mathbb R)=t+\epsilon.\]
		Following the same argument as in Example \ref{ex-2} and recalling $\textup{I}_t(\mu\ast\sigma)\lesssim 1$, we conclude that $\mathfrak{f}(s,t)=t$ when $t\geq s+1$.
	\end{ex}
	
	\subsection{Application}\label{sub-1.2}
	As an application of Theorem \ref{main}, we next show the sharp $L^6$-decay of Fourier transform of Frostman measures on $\Gamma$. 
	\begin{thm}\label{main2}
		Let $s \in[2/3,1]$. Assume that $\sigma$ is an $(s,1)$-Frostman measure on $\Gamma$.
		Then
		\begin{equation}\label{decay}
			\|\widehat{\sigma}\|_{L^6(B(R))}^6\leq C(\psi,s,\epsilon) R^{1-s+\epsilon}, \quad R\geq 1, ~\forall\epsilon\in(0,1).
		\end{equation}
	\end{thm}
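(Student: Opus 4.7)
The plan is to reduce \eqref{decay} to a Riesz-energy bound on the triple convolution $\sigma\ast\sigma\ast\sigma$, and then establish that bound by iterating Theorem~\ref{main}. Since $\sigma$ is a positive finite measure, $|\widehat\sigma(\xi)|^6 = |\widehat{\sigma\ast\sigma\ast\sigma}(\xi)|^2$. For any $\omega\in(0,2)$ and any $\xi\in B(R)\setminus\{0\}$ we have $|\xi|^{\omega-2}\geq R^{\omega-2}$, so \eqref{Riesz} applied to $\sigma\ast\sigma\ast\sigma$ gives
\[
\int_{B(R)} |\widehat\sigma(\xi)|^6\,d\xi \;\leq\; R^{2-\omega}\int_{\mathbb R^2}|\widehat{\sigma\ast\sigma\ast\sigma}(\xi)|^2 |\xi|^{\omega-2}\,d\xi \;\lesssim\; R^{2-\omega}\,\textup{I}_\omega(\sigma\ast\sigma\ast\sigma).
\]
Setting $\omega=s+1-\epsilon$ reduces \eqref{decay} to the energy estimate $\textup{I}_{s+1-\epsilon}(\sigma\ast\sigma\ast\sigma)\lesssim_{\psi,s,\epsilon}1$ for each $\epsilon\in(0,1)$.

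I would obtain this energy bound by applying the quantitative estimate \eqref{form-main2} of Theorem~\ref{main} twice. For the first application, take $\mu=\sigma$ (after a $\psi$-dependent dilation bringing $\operatorname{spt}(\sigma)$ into $B(1)$) and $t_1 = s-\eta$ for small $\eta\in(0,s)$; since $\sigma$ is $(s,1)$-Frostman we have $\textup{I}_{t_1}(\sigma)<\infty$, and the first case of \eqref{form-main} gives $\mathfrak{f}(s,t_1)=2s-\eta$, so
\[
\textup{I}_{2s-\eta-\epsilon_1}(\sigma\ast\sigma) \;\leq\; C(\psi,s,\eta,\epsilon_1),\qquad \epsilon_1\in(0,1).
\]
For the second application, take $\mu'=\sigma\ast\sigma$ (again rescaled into $B(1)$) and $t_2=2s-\eta-\epsilon_1$. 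For $s\in(2/3,1]$ and $\eta,\epsilon_1$ chosen small enough, $t_2\in[2-s,s+1]$ (using $3s>2$), placing us in the second case of \eqref{form-main} with $\mathfrak{f}(s,t_2)=s+1$; this yields
\[
\textup{I}_{s+1-\epsilon}(\sigma\ast\sigma\ast\sigma) \;=\; \textup{I}_{s+1-\epsilon}(\mu'\ast\sigma) \;\leq\; C(\psi,s,\eta,\epsilon_1,\epsilon),
\]
which combined with the reduction above proves \eqref{decay} on $s\in(2/3,1]$.

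The hard part will be the endpoint $s=2/3$, where the above chain becomes tight: the admissible input range $[2-s,s+1]$ for the second application collapses to $[4/3,5/3]$, while the first application only produces $\textup{I}_{4/3-\delta}(\sigma\ast\sigma)<\infty$ for $\delta>0$, strictly below the required threshold $4/3$. I would try to close this gap either by a limiting argument exploiting the $\epsilon$-slack in the target exponent $1-s+\epsilon$, or by a direct endpoint bound on $\textup{I}_{4/3}(\sigma\ast\sigma)$ using the curvature of $\Gamma$ when $\sigma$ is $(2/3,1)$-Frostman. The remaining checks are routine: a $\psi$-dependent dilation brings $\sigma\ast\sigma$ into $B(1)$ at the cost of only a $\psi$-factor in the constants, and $(s,1)$-Frostman scales to $(s,O_\psi(1))$-Frostman, absorbed by $C_\sigma^2$ in \eqref{form-main2}.
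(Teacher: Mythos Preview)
Your approach is the paper's: reduce \eqref{decay} to $\textup{I}_{s+1-\epsilon}(\sigma\ast\sigma\ast\sigma)\lesssim 1$ via \eqref{Riesz}, then iterate Theorem~\ref{main} twice --- first with $\mu=\sigma$ to get $\textup{I}_{2s-\tau}(\sigma\ast\sigma)\lesssim 1$, then with $\mu=\sigma\ast\sigma$ to land in the second case of \eqref{form-main} and reach $s+1$. The paper writes exactly these two steps, with the same parameter bookkeeping.

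You are right that $s=2/3$ is the delicate point, and in fact the paper's own ``letting $\tau\to 0$'' does not strictly close it either: the second case of \eqref{form-main} requires $t\geq 2-s$, while the first step only delivers $\textup{I}_t(\sigma\ast\sigma)<\infty$ for $t<2s$, and at $s=2/3$ these ranges touch only at the excluded point $t=4/3$. Neither of your proposed fixes works as stated --- the $\epsilon$-slack in the \emph{output} exponent does not relax the input constraint $t\geq 2-s$, and a genuine bound on $\textup{I}_{4/3}(\sigma\ast\sigma)$ is generally unavailable since $4/3=\mathfrak f(2/3,2/3)$ is a supremum, not an attained value. The clean fix is the intermediate lower bound $\mathfrak f(s,t)\geq 2s+t-1$ valid for all $t\in(0,2-s]$, which the paper establishes later as Theorem~\ref{L2-estimates} (the $L^2$-smoothing estimate with $\zeta(s,t)=2s+t-1$): at $s=2/3$ and $t=4/3-\tau$ this gives $\textup{I}_{5/3-\tau-\epsilon'}(\sigma\ast\sigma\ast\sigma)\lesssim 1$, and choosing $\tau+\epsilon'<\epsilon$ finishes the endpoint.
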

	\begin{proof}[Proof of Theorem \ref{main2}]
		By applying \eqref{form-main} with $\mu=\sigma$, we get $\textup{I}_{2s-\tau}(\sigma\ast \sigma)\lesssim_{\psi,s, \tau} 1$ for any $\tau>0$. A second use of Theorem \ref{main} with $\mu=\sigma\ast\sigma$ gives $\textup{I}_{s+1-\tau}(\sigma\ast \sigma\ast \sigma)\lesssim_{\psi,s, \tau} 1$ when $s\geq 2/3+\tau/3$. Letting $\tau\to 0$, we obtain that $\textup{I}_{s+1-\epsilon}(\sigma\ast \sigma\ast \sigma)\lesssim_{\psi, s, \epsilon} 1$ if $s\geq 2/3$. By the second identity in \eqref{Riesz}, we deduce 
		\[\|\widehat{\sigma}\|_{L^6(B(R))}^6=\|\widehat{\sigma\ast \sigma\ast \sigma}\|_{L^2(B(R))}^2\lesssim_{\psi, s, \epsilon} R^{1-s+\epsilon},\quad \forall\epsilon\in(0,1),\]
		as desired.
	\end{proof}
	The sharp exponent for all $s\in[0,1]$ in \eqref{decay} was conjectured to be $2-\min\{3s, 1+s\}$, see \cite[Example 1.8]{MR4528124}. When $s\geq 2/3$, Orponen-Puliatti-Py\"or\"al\"a \cite{orponen2024fourier} also established \eqref{decay} when $\psi(x)=x^2$. For general planar curve with non-zero curvature, see \cite{MR4661079} for the bound "$2-2s-\beta(s)$" with $\beta(s)>0$ a small implicit constant. Recently, Demeter-Wang \cite[Theorem 1.2]{demeterhong2024} improved this to "$2-2s-\tfrac{s}{4}$" when $s\in [0,\frac{1}{2}]$ by using their Szemer\'edi–Trotter type of incidence theorem.
	
	\subsection{Proof ideas}\label{sub-1.3}
	It suffices to show inequalities \eqref{form-main3}. Instead, we prove the following stronger $L^2$-inequality which implies \eqref{form-main3}: 
	\begin{equation}\label{form-ineq1}
		\|(\mu\ast\sigma)_\delta\|_{L^2}^2\lesssim_{\psi,s,t,\epsilon} \delta^{\zeta(s,t)-2-\epsilon}, \quad \forall \epsilon\in(0,1),
	\end{equation}
	where $(\mu\ast\sigma)_\delta=\mu\ast\sigma\ast\eta_\delta$ with $\eta_\delta(x)=\delta^{-2}\eta(\frac{x}{\delta})$ the rescaled mollifier and
	\begin{equation}\label{form-parameter}
		\zeta(s,t)=
		\begin{cases}
			s+t, & \text{when} ~~ t\in(0,s],~s\in (0, 1],\\
			2s+t-1, & \text{when}~~ t\in (0, 2-s],~s\in [0, 1].\\
		\end{cases}
	\end{equation}
	\begin{remark}\label{rmk-reduction}
		Here we give some comments on why \eqref{form-ineq1} implies \eqref{form-main3}. For $\zeta(s,t)=s+t$ when $t\in(0,s]$, there is nothing to say about the implication. For $\zeta(s,t)=2s+t-1$ when $s+t\leq 2$, we claim that this implies \eqref{form-ineq1} with $2s+t-1$ replaced by $s+1$ when $s+t\geq 2$:
		\begin{equation}\label{form-uppercase}
			\|(\mu\ast\sigma)_\delta\|_{L^2}^2\lesssim_{\psi,s,t,\epsilon} \delta^{s-1-\epsilon}, \quad \forall \epsilon\in(0,1).
		\end{equation}
		Indeed, if $s+t\geq2$, we denote $\bar{t}:=2-s$, then $\textup{I}_{\bar{t}}(\mu)\lesssim \textup{I}_{t}(\mu)<+\infty$ and $s+\bar{t}\leq 2$. Using result for the case $s+t\leq 2$, we readily see $\zeta(s,\bar{t})=2s+\bar{t}-1=s+1$ which proves \eqref{form-uppercase}. Then it is clear that \eqref{form-uppercase} implies \eqref{form-main3} when $t\in[2-s,s+1]$.
	\end{remark}
	
	We will further reduce \eqref{form-ineq1} to its $\delta$-discretised version, then proving the discretised version will mainly depend on incidence estimates between $\delta$-cubes and curved $\delta$-tubes, see Section \ref{sec2} for the terminology and the reduction. In precise, we need to extend Fu-Ren's estimate \cite[Theorem 5.2]{MR4751206} to curvilinear case. This is easy when $\psi(x)=x^2$ because parabola can be transformed into line by the map
	\[\Psi:\mathbb R^2 \to \mathbb R^2, \quad \Psi(x,y):=(x, x^2-y).\]
	However, if $\psi$ is an arbitrary $C^2$-curve with $\psi''\neq 0$, we do not have a general map to transform $\Gamma$ into line, which is the main challenge in our problem. It turns out the case $t\in(0,s]$ is easier than the case $t\in (0,2-s]$ since an elementary incidence estimate can be obtained by analyzing the geometry of the intersection of two curved $\delta$-tubes, see Proposition \ref{pro-incidence2}. As for $t\in (0,2-s]$, we will use the method in \cite[Section 2]{MR4722034}. In fact, the following $\delta$-incidence theorem of measures will be established, which is the main tool to build incidence estimates between $\delta$-cubes and curved $\delta$-tubes. The notations will be properly defined in Subsection \ref{sub-2.1}. 
	\begin{thm}\label{wei}
		Let $\mu$ and $\nu$ both be finite Borel measures with compact support on $B(1)$. Then for any $\delta\in(0,1)$, there holds
		\begin{equation}\label{form-wei}
			\mathcal{I}_\delta(\mu,\nu)\lesssim_{\psi,t} \delta\sqrt{\textup{I}_{3-t}(\mu) \textup{I}_{t}(\nu)}, \quad t\in (1,2).
		\end{equation}
	\end{thm}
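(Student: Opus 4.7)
The plan is to express $\mathcal{I}_\delta(\mu,\nu)$ as a bilinear form integrating the indicator of the $\delta$-thickening of $\Gamma$ against $\mu\otimes\nu$, then apply Plancherel and bound the resulting Fourier-side kernel by stationary phase using the curvature hypothesis $\psi''>0$. First I will unpack the definition of $\mathcal{I}_\delta(\mu,\nu)$ set up in Subsection \ref{sub-2.1}: it counts incidences between $\delta$-cubes parametrized by $\mu$ and curved $\delta$-tubes parametrized by $\nu$, where each tube is a $\delta$-neighborhood of some translate of $\Gamma$. Since $y$ lies in the tube indexed by $z$ iff $y-z\in\Gamma(\delta)$, after standard smoothing the quantity reduces to
\[\mathcal{I}_\delta(\mu,\nu)\lesssim\int\!\!\int\chi_\delta(y-z)\,d\mu(y)\,d\nu(z),\]
where $\chi_\delta$ is a mollification of the indicator of the vertical strip $\{(z_1,z_2):|z_2-\psi(z_1)|\le\delta,\,|z_1|\le1\}$.

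Plancherel then rewrites the right side as $\int\hat{\chi}_\delta(\xi)\hat{\mu}(\xi)\overline{\hat{\nu}(\xi)}\,d\xi$, and the change of variable $u=z_2-\psi(z_1)$ factorises
\[\hat{\chi}_\delta(\xi_1,\xi_2)=\bigg(\int_{-1}^{1}e^{-2\pi i(\xi_1 z+\xi_2\psi(z))}\,dz\bigg)\cdot\hat{\eta}_\delta(\xi_2),\]
with $|\hat{\eta}_\delta(\xi_2)|\lesssim\delta(1+\delta|\xi_2|)^{-N}$. The oscillatory factor has phase with second derivative $\xi_2\psi''(z)$; combining Van der Corput's lemma in the regime $|\xi_2|\gtrsim|\xi|$ with non-stationary integration by parts in the regime $|\xi_1|\gg|\xi_2|$ (both using $\psi''>0$) yields the uniform bound $\lesssim_\psi(1+|\xi|)^{-1/2}$, so
\[|\hat{\chi}_\delta(\xi)|\lesssim_\psi \delta\,(1+|\xi|)^{-1/2}(1+\delta|\xi|)^{-N}.\]

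Now Cauchy-Schwarz in $\xi$ with the weight split $(1+|\xi|)^{-1/2}=(1+|\xi|)^{(1-t)/2}(1+|\xi|)^{(t-2)/2}$ gives
\[|\mathcal{I}_\delta(\mu,\nu)|\lesssim\delta\bigg(\!\int |\hat{\mu}(\xi)|^2(1+|\xi|)^{1-t}d\xi\!\bigg)^{\!1/2}\!\bigg(\!\int |\hat{\nu}(\xi)|^2(1+|\xi|)^{t-2}d\xi\!\bigg)^{\!1/2}\!,\]
and by the Fourier representation in \eqref{Riesz} the two factors are comparable to $\textup{I}_{3-t}(\mu)$ and $\textup{I}_t(\nu)$, modulo a trivial low-frequency contribution absorbed by $\|\mu\|\cdot\|\nu\|\lesssim 1$. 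The hypothesis $t\in(1,2)$ is precisely what places both exponents $t$ and $3-t$ in the admissible range $(0,2)$ for Riesz energies of measures in $\mathbb{R}^2$.

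The main obstacle will be the first step: faithfully translating the combinatorial incidence count $\mathcal{I}_\delta(\mu,\nu)$ into the smooth bilinear convolution form above. Once one identifies the $\delta$-cube at $y$ and the curved $\delta$-tube at $z$ with the translation-invariant condition $y-z\in\Gamma(\delta)$, the Fourier-analytic argument is essentially routine, and the curvature of $\psi$ enters only through the stationary-phase bound on $\hat{\chi}_\delta$.
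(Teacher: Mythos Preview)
Your argument is correct and rests on the same Fourier-analytic core as the paper: the $(1+|\xi|)^{-1/2}$ decay of the Fourier transform of the curve measure (via Van der Corput/non-stationary phase under $\psi''>0$), followed by Cauchy--Schwarz with the weight split $|\xi|^{-1/2}=|\xi|^{(1-t)/2}|\xi|^{(t-2)/2}$ to produce $\textup{I}_{3-t}(\mu)$ and $\textup{I}_t(\nu)$. The paper packages this differently: it introduces the convolution operator $\mathfrak{R}f=f\ast\mu_\Gamma$, proves the two endpoint Sobolev bounds $\dot H^{-1/2}\to L^2$ and $\dot H^{1/2}\to\dot H^1$, interpolates via Stein--Weiss to obtain $\|\mathfrak{R}f\|_{\dot H^{s+1/2}}\lesssim\|f\|_{\dot H^s}$, and uses a separate pointwise connecting lemma for $\mathbf{1}_{\Gamma_q(\delta)}(p)$ to pass from the incidence count to $\int\nu_\delta\,\widetilde{\mathfrak{R}}\mu$. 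Your route is more direct: the observation $p\in\Gamma_q(\delta)\iff p-q\in\Gamma(\delta)$ replaces the connecting lemma, and the factorisation $\hat\chi_\delta(\xi)=I(\xi)\,\hat\eta_\delta(\xi_2)$ via $u=z_2-\psi(z_1)$ makes the $\delta$ factor and the curvature gain visible in one line, rendering the operator formalism and the interpolation step unnecessary. What the paper's approach buys is modularity (the Sobolev mapping property of $\mathfrak{R}$ is stated as a standalone result), while yours buys brevity.
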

	Theorem \ref{wei} will follow from the Sobolev estimates (Lemma \ref{Sobolev1}) of an operator $\mathfrak{R}$ (Definition \ref{def:R}). We mention that this is also why the estimate \eqref{form-wei} does not contain small error $\delta^{-\epsilon}$ as we will see from the proofs. Let us note the idea of using operators to study incidences has also been applied in \cite{MR2836125,MR2538607}.
	
	We do not know what the precise value of "$\mathfrak{f}(s,t)$" should be for the remaining cases of Question \ref{qt}. Our guess is that this problem has the similar numerology to Furstenberg set problem, see \cite{OS23, OSABC, KevinHong} for the solution of Furstenberg set problem in the plane. This means the following conjecture may be plausible:
	\begin{conjecture}\label{cj}
		For $s\in(0,\frac{1}{2}]$ and $t\in [s,3s]$, or $s\in [\frac{1}{2},1]$ and $t\in [s, 2-s]$, there holds 
		\begin{equation}\label{cj1}
			\mathfrak{f}(s,t)=\frac{3s+t}{2}.
		\end{equation}
	\end{conjecture}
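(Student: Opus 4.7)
The plan is to attack Conjecture~\ref{cj} by mimicking the reduction of Section~\ref{sec2} and then casting the remaining problem in the language of the planar Furstenberg set problem, where the exponent $\tfrac{3s+t}{2}$ is known in the straight-line case to be sharp. A first sanity check: at the boundaries $t=s$, $t=3s$, $t=2-s$ the conjectured value $\tfrac{3s+t}{2}$ coincides with the exponents $s+t$, $t$, $s+1$ already established in Theorem~\ref{main}, so the conjecture continuously interpolates the proven thresholds. By Remark~\ref{rmk-reduction} it suffices to prove the $L^2$-bound
\[
\|(\mu\ast\sigma)_\delta\|_{L^2}^2 \lesssim_{\psi,s,t,\epsilon} \delta^{\tfrac{3s+t}{2}-2-\epsilon}
\]
in the stated parameter range, after which the $\delta$-discretisation of Section~\ref{sec2} turns the problem into a count of incidences between $\sim \delta^{-t}$ dyadic $\delta$-squares coming from $\mu$ and $\sim \delta^{-s}$ curved $\delta$-tubes (translates of the $\delta$-neighborhood of $\Gamma$) coming from $\sigma$.

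The heart of the argument would be a Furstenberg-type incidence estimate for $C^2$ curves with non-zero curvature: the number of such incidences should obey a bound whose insertion into the $L^2$-inequality produces the target exponent $\tfrac{3s+t}{2}$. In the straight-line case this is exactly the content of \cite{OS23, OSABC, KevinHong}. The natural strategy is to transplant their multi-scale high-low scheme to the curved setting, using the fact that two $\delta$-neighborhoods of $\Gamma$-translates behave like parallel line-tubes at scales much smaller than the reciprocal curvature and become transversal at larger scales. Theorem~\ref{wei} then serves as a clean loss-free input at the transversal scales, while a rescaling at the linear scales reduces locally to the planar Furstenberg bound. For $\psi(x)=x^2$ the map $\Psi$ from Subsection~\ref{sub-1.3} sends $\Gamma$-tubes exactly to line-tubes, so this is the natural first case.

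The matching upper bound $\mathfrak{f}(s,t)\leq \tfrac{3s+t}{2}$ would come from a Furstenberg-type construction generalising Examples~\ref{ex-3} and~\ref{ex-4}: one takes $\sigma$ to be a properly normalised Ahlfors-regular measure on the $\delta$-neighborhood of an $(s,\delta)$-separated subset of $\Gamma$, and $\mu$ concentrated on the $\delta$-neighborhood of a union of $\sim \delta^{-(t-s)}$ translates of that subset arranged along an arithmetic-progression-like pattern, tuned so that $\mathrm{spt}(\mu\ast\sigma)$ has $\delta$-covering number $\sim \delta^{-(3s+t)/2}$, saturating the Furstenberg extremiser.

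The main obstacle I expect is making the Furstenberg machinery genuinely curved. Unlike lines, the point-line duality underlying \cite{OS23, OSABC} fails for an arbitrary $\Gamma$, and two $C^2$-curves with non-zero curvature can intersect in more than one point, which breaks the simplest incidence counts. One must replace duality by a pointwise tangent-line approximation and carefully track the resulting scale-dependent errors. The parabolic case can likely be obtained via $\Psi$; extending uniformly to general $C^2$ curves while preserving the $C_\sigma^2$-type dependence of Theorem~\ref{main} will presumably require adapting \cite{KevinHong, demeterhong2024} to a Riesz-energy rather than Hausdorff-dimension formulation.
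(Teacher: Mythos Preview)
The statement you are addressing is a \emph{conjecture} in the paper, not a theorem: the paper explicitly says ``We do not know what the precise value of $\mathfrak{f}(s,t)$ should be for the remaining cases'' and then states Conjecture~\ref{cj} as an open problem. There is therefore no proof in the paper to compare against.

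Your proposal is a reasonable \emph{research plan}, not a proof, and you have correctly identified the genuine gap yourself. The entire argument hinges on the sentence ``The heart of the argument would be a Furstenberg-type incidence estimate for $C^2$ curves with non-zero curvature,'' but no such estimate currently exists in the literature at the required strength. The results of \cite{OS23, OSABC, KevinHong} are proved for straight lines and rely essentially on point-line duality and the high-low method in a form that does not transfer automatically to curved tubes; your own final paragraph concedes exactly this. Even in the parabolic case $\psi(x)=x^2$, the map $\Psi$ sends the family of translates $q+\Gamma$ to a family of lines, but it does \emph{not} send the Frostman measure $\mu$ on $B(1)$ to a measure with controlled $t$-energy in the dual picture in a way that matches the Furstenberg set hypotheses --- one would need a genuine ``Furstenberg for Riesz energy'' statement, which is stronger than what \cite{OS23, KevinHong} provide. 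The upper-bound construction you sketch is also only heuristic: producing a configuration with $\textup{I}_t(\mu)\leq 1$ (rather than merely $\dim_{\mathrm{H}}=t$) whose convolution has $\delta$-covering number $\sim\delta^{-(3s+t)/2}$ requires care beyond Examples~\ref{ex-3}--\ref{ex-4}.

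In short, your outline correctly locates the problem inside the Furstenberg circle of ideas and matches the paper's own speculation, but the central incidence input is an open problem, so the proposal does not constitute a proof.
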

	We remind that all the other cases have been included in \eqref{form-main}. From the proof of Theorem \ref{main2}, we note that a full resolution of Conjecture \ref{cj} would improve Demeter-Wang's bound to "$2-2s-\frac{s}{2}$" for all $s\in [0, \frac{2}{3}]$. 
	
	\subsection{Outline of the paper} Section \ref{sec2} is the preliminary of this paper, in which we will reduce Theorem \ref{main} to some incidence estimates. Section \ref{sec3} contains the proof of Theorem \ref{main} for $t\in [2-s, s+1]$. We will mainly prove Theorem \ref{wei} and use it to build a weighted incidence estimate in Corollary \ref{incidence11}. Once this has been built, Fu-Ren's estimate \cite[Theorem 5.2]{MR4751206} can be extended to curvilinear case whose proof we put in Appendix \ref{apend A}. Finally, The proof of Theorem \ref{main} for the case $t\in (0,s]$ will be contained in Section \ref{sec4}, where an improved incidence estimate can be obtained by simple geometric argument. 
	
	\subsection*{Acknowledgement} The author would like to thank Tuomas Orponen for many useful comments and his constant support.

	\section{Preliminary}\label{sec2}
	We will give all the notations and definitions needed in this paper. Also, we will carry on a reduction process so that Theorem \ref{main} is reduced to some incidence estimates.
	
	\subsection{Notations and definitions}\label{sub-2.1} 
	We adopt the notations $\lesssim$, $\gtrsim$, $\sim$. For example, $A\lesssim B$ means $A\leq CB$ for some constant $C>0$, while $A\lesssim_rB$ stands for $A\leq C(r)B$ for a positive function $C(r)$. 
	
	For $\delta \in 2^{-\N}$, dyadic $\delta$-cubes in $\R^2$ are denoted $\mathcal{D}_{\delta}(\R^2)$. For $P \subset \R^2$, we write $\mathcal{D}_{\delta}(P) := \{p \in \mathcal{D}_{\delta}(\R^{d}) : P \cap p \neq \emptyset\}$. In particular, we write $\mathcal{D}_{\delta}([0,1]^2) :=\mathcal{D}_\delta$. 
	
	\begin{definition}[Curved tubes]\label{Curved tubes}
		Let $\delta\in(0,1]$ and $P\subset\mathbb R^2$. For each $q\in P$, let $q+\Gamma$ be the translation of $\Gamma$ by $q$. If $q\in\mathcal{D}_\delta$,  $q+\Gamma$ means the translation of $\Gamma$ by the center of $q$. 
		
		A curved $\delta$-tube is the closed $\delta$-neighborhood of $q+\Gamma$ for some $q\in P$. For simplicity, we let $\Gamma_q:=q+\Gamma$ and write $\Gamma_q(\delta)$ as the corresponding curved $\delta$-tube. We call $P$ the parameter set of $\mathcal{T}:=\{\Gamma_q(\delta): q\in P\}$.
	\end{definition}
	By definition \ref{Curved tubes}, a set of curved $\delta$-tubes $\mathcal{T}$ is uniquely determined by its parameter set $P$. In this paper, we will mainly consider curved $\delta$-tubes with parameter set $\mathcal{P}\subset\mathcal{D}_\delta$. The next definition of Katz-Tao condition was originally introduced by Katz and Tao \cite{MR1856956}.
	
	\begin{definition}[Katz-Tao $(\delta,s,C)$-set]\label{def:katzTaoSet}
		Let $P\subset\mathbb R^2$ be a bounded set. Let $\delta\in (0,1]$, $0\leq s\leq 2$ and $C\geq 1$. We say that $P$ is a \emph{Katz-Tao $(\delta,s,C)$-set} if
		\begin{equation}\label{def 1}
			|P\cap B(x,r)|_\delta\leq C\left(\tfrac{r}{\delta}\right)^s, \qquad x \in \R^{2}, \, \delta \leq r \leq 1. 
		\end{equation}
		The notation $|\cdot|_{\delta}$ refers to the $\delta$-covering number. If $\mathcal{P} \subset \mathcal{D}_{\delta}(\R^{2})$, we say that $\mathcal{P}$ is a Katz-Tao $(\delta,s,C)$-set if $P := \cup \mathcal{P}$ satisfies \eqref{def 1}. A set of curved $\delta$-tubes $\mathcal{T}$ is said to be a Katz-Tao $(\delta,s,C)$-set if its parameter set is.
	\end{definition}
	
	The following weighted Katz-Tao condition is only defined for dyadic elements. This condition enables us to prove a strong weighted incidence estimate in Corollary \ref{incidence11}.
	\begin{definition}[Weighted Katz-Tao condition]\label{def:weightedkatzTaoSet}
		Let $\delta\in (0,1]$, $s\in[0,2]$ and $C\geq 1$. Let $\mathcal{P}\subset\mathcal{D}_\delta$. For each $q\in\mathcal{P}$, there is a positive integer weight $w(q)$ associated with it. We say that $\mathcal{P}$ is a \emph{weighted Katz-Tao $(\delta,s,C)$-set} if for any $Q\in\mathcal{D}_r$ with $r\in[\delta,1]$ there holds
		\begin{equation}\label{form-KT}
			\sum_{q\in \mathcal{P}\cap Q} w(q)\leq C \left(\tfrac{r}{\delta}\right)^s.
		\end{equation}
		Let $\mathcal{T}:=\{\Gamma_q(\delta): q\in \mathcal{P}\subset \mathcal{D}_\delta\}$ be a set of curved $\delta$-tubes, then we say $\mathcal{T}$ is a weighted Katz-Tao $(\delta,s,C)$-set if its parameter set $\mathcal{P}$ is.
	\end{definition}
	\begin{remark}
		When $w\equiv1$, a weighted Katz-Tao set $\mathcal{P}\subset \mathcal{D}_\delta$ becomes a Katz-Tao set in the sense of Definition \ref{def:katzTaoSet}. Since both notions are needed for us, we will always be careful and explicit in either including the word "weighted", or omitting it. Also, if $\mathcal{P}$ is a weighted Katz-Tao $(\delta,s,C)$-set, we can deduce $w(q)\leq C$ for any $q\in\mathcal{P}$ by taking $r=\delta$ in \eqref{form-KT}.
		
		Moreover, when we say $q\in \mathcal{P}$ has multiplicity $w(q)$, we simply mean that when counting incidences $q$ appears $w(q)$ times, see the following definition.
	\end{remark}
	\begin{definition}[Weighted incidences]
		Let $\mathcal{F}\subset \mathcal{D}_\delta$ be a set of dyadic $\delta$-cubes and let $\mathcal{T}=\{\Gamma_q(\delta): q\in \mathcal{P}\subset\mathcal{D}_\delta\}$ be a set of curved $\delta$-tubes. If weight functions $w_1$ and $w_2$ are associated with $\mathcal{P}$ and $\mathcal{F}$ respectively, then we define the weighted incidences between $\mathcal{F}$ and $\mathcal{T}$ by
		\begin{equation*}
			\mathcal{I}_w(\mathcal{F}, \mathcal{T}):=\sum_{q\in\mathcal{P}} \sum_{p\in\mathcal{F}}w_1(q)w_2(p) \boldsymbol{1}_{\{p\cap \Gamma_q(\delta)\neq \emptyset\}}.
		\end{equation*}
	\end{definition}
	
	To get a nice upper bound for $\mathcal{I}_w(\mathcal{F}, \mathcal{T})$, we will need to first study the $\delta$-incidences between two finite Borel measures.
	\begin{definition}[$\delta$-incidences]\label{def:deltaincidence}
		Let $\mu$ and $\nu$ both be finite Borel measures on $\mathbb R^2$. For any $\delta\in(0,1)$ and $q\in \mathbb R^2$, the $\delta$-incidences are defined as
		\begin{equation*}
			\mathcal{I}_\delta(\mu, \nu):=\mu\times \nu\big(\{(p, q)\in \mathbb R^2\times \mathbb R^2: p\in \Gamma_q(\delta)\}\big).
		\end{equation*}
	\end{definition}
	
	\subsection{Reduction to incidence estimates}\label{sub-2.2}
	By Remark \ref{rmk-reduction}, to show Theorem \ref{main} it suffices to show the following $L^2$-smoothing estimate.
	\begin{thm}\label{L2-estimates}
		Let $s\in[0,1]$ and $t\in (0,2)$. Let $\mu$ be a Borel measure on $B(1)$ such that $\textup{I}_t(\mu)<+\infty$. Assume $C_\sigma\geq 1$ and let $\sigma$ be an $(s, C_\sigma)$-Frostman measure on $\Gamma$. Then
		\begin{equation}\label{form-L2}
			\|(\mu\ast\sigma)_\delta\|^2_{L^2(\mathbb R^2)}\lesssim_{\psi, s,t,\epsilon} \max\{\textup{I}_t(\mu), 1\}\cdot C_\sigma ^2\cdot\delta^{\zeta(s,t)-2-\epsilon}, \quad \epsilon\in(0,1),
		\end{equation}
		where 
		\begin{equation}\label{form-zeta}
			\zeta(s,t)=
			\begin{cases}
				s+t, & \text{when} ~~ t\in(0,s],~s\in (0, 1],\\
				2s+t-1, & \text{when}~~ t\in (0, 2-s],~s\in [0, 1].\\
			\end{cases}
		\end{equation}
		In particular, $\textup{I}_{\zeta(s,t)-\epsilon}(\mu\ast\sigma)\lesssim_{\psi, s,t,\epsilon} \max\{\textup{I}_t(\mu), 1\}\cdot C_\sigma ^2$~~for any $\epsilon\in(0,1)$. 
	\end{thm}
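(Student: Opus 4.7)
\textbf{Plan of proof for Theorem \ref{L2-estimates}.} The strategy is to $\delta$-discretise the $L^{2}$ norm, recast the resulting sum as a weighted count of incidences between dyadic $\delta$-cubes and curved $\delta$-tubes, and then apply the incidence estimates the paper has lined up for later sections (the elementary geometric bound of Proposition \ref{pro-incidence2} when $t\in(0,s]$, and the weighted bound of Corollary \ref{incidence11} coming from Theorem \ref{wei} when $t\in(0,2-s]$). First, I would perform a standard dyadic pigeonhole: by losing $\log(1/\delta)^{O(1)}\leq \delta^{-\epsilon}$, I may assume that $\mu$ is concentrated on a family $\mathcal{P}\subset\mathcal{D}_\delta$ with $\mu(p)\sim M$ for $p\in\mathcal{P}$, and that $\sigma$ is concentrated on a family $\mathcal{Q}\subset\mathcal{D}_\delta$ of cubes intersecting $\Gamma$ with $\sigma(q)\sim S$. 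The Frostman condition on $\sigma$ gives $S\lesssim C_\sigma\delta^{s}$ and $|\mathcal{Q}|\lesssim C_\sigma\delta^{-s}$; the energy condition on $\mu$ is encoded via the formal inequality $M^{2}\sum_{p}\mathbf{1}_{|p-p'|\leq r}\lesssim r^{-t}M^{2}|\mathcal{P}|+\textup{I}_t(\mu)$, which in turn lets me assume $\mathcal{P}$ is (essentially) a Katz--Tao $(\delta,t,C)$-set after a further pigeonhole.

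Second, I would convert the $L^{2}$ norm into a cube-sum. Since $\eta_\delta\ast\eta_\delta\sim \delta^{-2}\mathbf{1}_{B(\delta)}$,
\begin{equation*}
\|(\mu\ast\sigma)_\delta\|_{L^{2}}^{2}\;\sim\;\delta^{-2}\sum_{p\in\mathcal{D}_\delta}(\mu\ast\sigma)(p)^{2},
\end{equation*}
and because $\sigma$ lives on $\Gamma$,
\begin{equation*}
(\mu\ast\sigma)(p)=\int\sigma(B(c_p-x,\delta))\,d\mu(x)\;\lesssim\;C_\sigma\delta^{s}\,\mu(T_p),
\end{equation*}
where $T_p$ is the curved $\delta$-tube $c_p-\Gamma(\delta)$ (a translate/reflection of $\Gamma$, still a $C^{2}$-graph with non-zero curvature). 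Expanding the square,
\begin{equation*}
\|(\mu\ast\sigma)_\delta\|_{L^{2}}^{2}\;\lesssim\;C_\sigma^{2}\delta^{2s-2}\sum_{q_1,q_2\in\mathcal{P}} \mu(q_1)\mu(q_2)\,\bigl|\{p:q_1,q_2\subset 2T_p\}\bigr|,
\end{equation*}
which is exactly a weighted incidence count between pairs of $\delta$-cubes in $\mathcal{P}$ and the family of curved $\delta$-tubes $\{T_p\}_{p\in\mathcal{D}_\delta}$.

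Third, I would plug in the appropriate incidence estimate. For $t\in(0,s]$, Proposition \ref{pro-incidence2} (the elementary two-curve intersection bound) gives a count of tubes through a pair at separation $\rho$ of order $\rho^{-1}$, yielding, after summation against the Katz--Tao structure of $\mathcal{P}$, the exponent $\zeta(s,t)=s+t$. For $t\in(0,2-s]$, the curved geometry forces a finer analysis: I would apply the weighted Katz--Tao bound of Corollary \ref{incidence11}, which itself rests on Theorem \ref{wei}. Combining with the bookkeeping $M^{2}|\mathcal{P}|\lesssim \max\{\textup{I}_t(\mu),1\}\cdot\delta^{t}$ produces $\zeta(s,t)=2s+t-1$. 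Remark \ref{rmk-reduction}'s trick (replacing $t$ by $2-s$) then extends the range to $t\in[2-s,s+1]$ and gives the $s+1$ regime of Theorem \ref{main}.

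\textbf{Main obstacle.} The crux is the $t\in(0,2-s]$ range. If $\psi(x)=x^{2}$, the affine map $\Psi(x,y)=(x,x^{2}-y)$ linearises all translates of $\Gamma$, and one can invoke Fu--Ren \cite[Theorem 5.2]{MR4751206} directly. For a general $C^{2}$-graph with $\psi''\neq 0$ there is no such global straightening, so the incidence bound must be proved intrinsically. This is exactly what Theorem \ref{wei} is engineered to do: it controls the continuous $\delta$-incidences $\mathcal{I}_\delta(\mu,\nu)$ by $\delta\sqrt{\textup{I}_{3-t}(\mu)\,\textup{I}_t(\nu)}$ using $L^{2}$--Sobolev estimates for an averaging operator $\mathfrak{R}$ along translates of $\Gamma$, where the curvature hypothesis enters through oscillatory (non-stationary) decay. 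Once Theorem \ref{wei} is in hand, the reduction to a weighted Katz--Tao setting in Corollary \ref{incidence11} is a relatively clean ($\delta^{-\epsilon}$-loss) pigeonholing argument, and Theorem \ref{L2-estimates} follows by the summation above.
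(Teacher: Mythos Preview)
Your overall strategy---discretise, pigeonhole, reduce to incidences---matches the paper's, and for the regime $\zeta(s,t)=2s+t-1$ it can be pushed through (though the cleanest route from your expanded square is not Corollary~\ref{incidence11}, which bounds linear incidence counts $\mathcal{I}_w$ rather than $\sum_p\mu(T_p)^2$; one should instead observe that $\sum_p\mu(T_p)^2$ is essentially $\|\mu_\delta\ast\mathcal{H}^{1}|_{-\Gamma}\|_{L^2}^{2}\lesssim\textup{I}_1(\mu_\delta)\lesssim\delta^{t-1}\textup{I}_t(\mu)$ via Lemma~\ref{Sobolev1}).

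The genuine gap is in the case $t\in(0,s]$. Your step-2 bound $(\mu\ast\sigma)(p)\lesssim C_\sigma\delta^{s}\mu(T_p)$ replaces $\sigma$ by its Frostman majorant---effectively the full arc-length on $\Gamma$---and discards the fact that $\operatorname{spt}(\sigma)$ is $s$-dimensional inside $\Gamma$. After that step the only trace of $s$ is the prefactor $\delta^{2s}$; the two-tube intersection bound you quote gives $|\{p:q_1,q_2\subset T_p\}|\lesssim|q_1-q_2|^{-1}$, and summing against $\mu\times\mu$ yields $\textup{I}_1(\mu_\delta)\lesssim\delta^{t-1}\textup{I}_t(\mu)$, producing only $\zeta=2s+t-1$, strictly worse than $s+t$ whenever $s<1$. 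What makes Proposition~\ref{pro-incidence2} sharper is precisely that its sets $\mathcal{F}(q)$ carry the Katz--Tao $(\delta,s,B)$ structure inherited from $\sigma$: the intersection $\mathcal{F}(q_1)\cap\mathcal{F}(q_2)$ sits in a ball of radius $\sim\delta/|x_{q_1}-x_{q_2}|$ and hence has $\lesssim B|x_{q_1}-x_{q_2}|^{-s}$ elements, not $|x_{q_1}-x_{q_2}|^{-1}$. The paper preserves this structure by \emph{not} applying the crude Frostman bound on $\sigma$; it instead keeps $\sigma$ in play, discretising it alongside $\mu$ into the $\delta$-measures $\boldsymbol{\sigma}_q$ of Proposition~\ref{pro-dis}, so that in Theorem~\ref{id-thm} both the $(\delta,t)$ condition on $\mathcal{P}$ and the $(\delta,s)$ condition on each $\mathcal{F}(q)$ are available.
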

	\begin{remark}
		(i) If $t\in [s,2-s]$, by \eqref{form-L2} we have $\mathfrak{f}(s,t)\geq 2s+t-1$, which is all we can say about the intermediate case. This is far from the conjectured bound \eqref{cj1} noting that $(t+2s-1)<(3s+t)/2$ when $t<2-s$.
		
		(ii) By Plancherel identity, Theorem \ref{L2-estimates} implies the $L^4$-estimate of $\sigma$, that is, 
		\[\|(\sigma\ast\sigma)_\delta\|^2_{L^2(\mathbb R^2)}\lesssim\delta^{2s-2-\epsilon}\Longrightarrow \|\hat{\sigma}\|_{L^4(B(R))}^4\lesssim R^{2-2s+\epsilon}.\]
		This was established in \cite[Section 3]{MR4528124} when $\psi(x)=x^2$, in which a Fourier analytic proof was provided. Our proof will be simple and depends on an incidence estimate. When $s=1$, the $L^4$-estimate of $\hat{\sigma}$ also follows from the sharp restriction estimate, see for example \cite[Theorem 1.14]{MR3971577}.
	\end{remark}
	
	Before the reduction process, we take the definition of $\delta$-measures from \cite[Section 6]{orponen2024fourier}.
	\begin{definition}[$\delta$-measure]\label{def:measure}
		A collection of non-negative weights $\boldsymbol{\mu}=\{\mu(p)\}_{p\in\mathcal{D}_\delta}$ with $\|\boldsymbol{\mu}\|:=\sum_p\mu(p)\leq 1$ is called a $\delta$-measure. Let $C_\mu\geq1$. We say that a $\delta$-measure $\boldsymbol{\mu}$ is a $(\delta,s,C_\mu)$-measure if $\mu(q)\leq C_\mu r^s$ for all $q\in\mathcal{D}_r$ with $r\in[\delta,1]$. Moreover, the $s$-energy of $\boldsymbol{\mu}$ is defined as
		\[\textup{I}_s(\boldsymbol{\mu}):=1+\sum_{p\neq q}\frac{\mu(p)\mu(q)}{\text{dist}(p,q)},\]
		where $\text{dist}(p,q)$ is the distance between the midpoints of $p$ and $q$.
	\end{definition}
	
	Now we begin to make the reductions. Namely, Theorem \ref{L2-estimates} can be first reduced to Proposition \ref{pro-dis}. This reduction is standard and we will omit the proof as one can check \cite[Section 6]{orponen2024fourier} for the details when $\psi(x)=x^2$ (note the proof generalizes easily to our case). Here we list it for the reader's convenience.
	\begin{proposition}\label{pro-dis}
		Let $s\in[0,1]$ and $t\in (0,2)$. Assume $\mathbf{C}_\mu, \mathbf{C}_\sigma\geq 1$. Let $\boldsymbol{{\mu}}$ be a $(\delta, t, \mathbf{C}_\mu)$-measure. For each $q\in \mathcal{D}_\delta$, let $\boldsymbol{\sigma}_q$ be a $(\delta, s, \mathbf{C}_\sigma)$-measure supported on $\{p\in\mathcal{D}_\delta: p\cap \Gamma_q(\delta)\neq \emptyset\}$. Let $\zeta(s,t)$ be the same as \eqref{form-zeta}. Then for any $\epsilon\in (0,1)$, there holds
		\begin{equation}\label{form-dis}
			\int \left(\sum_{q\in \mathcal{D}_\delta}\mu(q)\sum_{p\in\mathcal{D}_\delta}\boldsymbol{\sigma}_q(p)\cdot(\delta^{-2}\mathbf{1}_p)\right)^2 dx \leq_{\psi, s, t,\epsilon} (\mathbf{C}_\mu\mathbf{C}_\sigma^2\|\boldsymbol{\mu}\|)\cdot \delta^{\zeta(s,t)-2-\epsilon}+1.
		\end{equation}
	\end{proposition}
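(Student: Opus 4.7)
The plan is to expand the $L^{2}$-norm on the left side of \eqref{form-dis} using the orthogonality of indicator functions of dyadic cubes, pigeonhole the weights into dyadic levels, and reduce the resulting sum to a weighted incidence count between cubes and curved $\delta$-tubes. The incidence count is then handled by Proposition \ref{pro-incidence2} in the range $t \in (0,s]$, and by Corollary \ref{incidence11} (itself a consequence of Theorem \ref{wei}) in the range $t \in (0, 2-s]$; both yield precisely the target exponent $\zeta(s,t)$.

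\textbf{Step 1 ($L^{2}$ expansion).} Writing $f(x) := \sum_{q} \mu(q) \sum_{p} \boldsymbol{\sigma}_{q}(p)\cdot(\delta^{-2}\mathbf{1}_{p}(x))$ and using that cubes $p \in \mathcal{D}_{\delta}$ have essentially disjoint interiors, one obtains
$$\|f\|_{L^{2}}^{2} \;=\; \delta^{-2}\sum_{p \in \mathcal{D}_{\delta}}\Big(\sum_{q}\mu(q)\boldsymbol{\sigma}_{q}(p)\Big)^{2} \;=\; \delta^{-2}\sum_{q, q'}\mu(q)\mu(q')\sum_{p}\boldsymbol{\sigma}_{q}(p)\boldsymbol{\sigma}_{q'}(p).$$
Since $\boldsymbol{\sigma}_{q}(p) > 0$ forces $p \cap \Gamma_{q}(\delta) \neq \emptyset$, the inner sum is supported on cubes $p$ incident to both $\Gamma_{q}(\delta)$ and $\Gamma_{q'}(\delta)$, i.e.\ a bilinear object that one would like to read as an incidence count.

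\textbf{Step 2 (Dyadic pigeonhole).} Using $\boldsymbol{\sigma}_{q}(p) \leq \mathbf{C}_{\sigma}\delta^{s}$ and $\mu(q) \leq \mathbf{C}_{\mu}\delta^{t}$, split the pairs $(q,p)$ according to dyadic ranges $\boldsymbol{\sigma}_{q}(p) \sim \rho$ and the $q$'s by $\mu(q) \sim \lambda$. This costs only $O(\log^{2}(1/\delta))$, absorbed into $\delta^{-\epsilon}$. After pigeonholing, one is reduced to a uniform configuration: a subset $\mathcal{Q} \subset \mathcal{D}_{\delta}$ on which $\mu(q) \sim \lambda$, together with, for each $q \in \mathcal{Q}$, a set $\mathcal{P}_{q} \subset \{p : p \cap \Gamma_{q}(\delta) \neq \emptyset\}$ on which $\boldsymbol{\sigma}_{q} \sim \rho$. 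The Frostman conditions translate into Katz-Tao-type bounds (Definition \ref{def:weightedkatzTaoSet}) on $\mathcal{Q}$ and on each $\mathcal{P}_{q}$, while $\sum_{p}\boldsymbol{\sigma}_{q}(p) \leq 1$ and $\sum_{q}\mu(q) = \|\boldsymbol{\mu}\|$ fix the totals.

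\textbf{Step 3 (Incidence step).} After these reductions the quantity to bound becomes
$$\delta^{-2}\lambda^{2}\rho^{2}\cdot\#\{(q,q',p) \in \mathcal{Q}^{2}\times\mathcal{D}_{\delta} : p \in \mathcal{P}_{q} \cap \mathcal{P}_{q'}\} \;\leq\; \delta^{-2}\lambda^{2}\rho^{2}\sum_{p}\big(\#\{q \in \mathcal{Q}: p \cap \Gamma_{q}(\delta) \neq \emptyset\}\big)^{2}.$$
The right side is an $L^{2}$-type weighted incidence count $\mathcal{I}_w(\mathcal{D}_\delta,\{\Gamma_q(\delta)\}_{q \in \mathcal{Q}})$. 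In the range $t \in (0,s]$, Proposition \ref{pro-incidence2} (obtained from a direct geometric analysis of $\Gamma_{q}(\delta)\cap\Gamma_{q'}(\delta)$ via the curvature hypothesis $\psi'' > 0$) yields the bound with $\zeta(s,t) = s+t$; in the range $t \in (0, 2-s]$, Corollary \ref{incidence11} yields $\zeta(s,t) = 2s+t-1$. Substituting the bounds $\lambda|\mathcal{Q}| \leq \|\boldsymbol{\mu}\|$ and $\rho|\mathcal{P}_{q}|\lesssim 1$ and reassembling the dyadic levels completes the proof; the "$+1$" on the right of \eqref{form-dis} absorbs the trivial scales where the pointwise estimate $\mu(q)\boldsymbol{\sigma}_{q}(p) \leq \mathbf{C}_{\mu}\mathbf{C}_{\sigma}\delta^{s+t}$ already suffices.

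\textbf{Main obstacle.} The argument above is essentially book-keeping once the incidence bounds of Proposition \ref{pro-incidence2} and Corollary \ref{incidence11} are in hand. The genuine difficulty therefore lies in Theorem \ref{wei} and its corollary: for a general $C^{2}$ curve with $\psi''\neq 0$ there is no global affine map that linearizes $\Gamma$ into a line (unlike the parabola, where the map $\Psi(x,y)=(x,x^{2}-y)$ works), so Fu-Ren's planar incidence estimate cannot be quoted verbatim. One expects the right substitute to be an operator $\mathfrak{R}$ encoding the curved incidence relation, for which Sobolev estimates of the type used in \cite{MR2836125, MR2538607} deliver the sharp, loss-free bound $\mathcal{I}_{\delta}(\mu,\nu)\lesssim \delta\sqrt{\mathrm{I}_{3-t}(\mu)\mathrm{I}_{t}(\nu)}$ appearing in \eqref{form-wei}.
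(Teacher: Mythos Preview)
Your overall strategy---expand the $L^{2}$ norm, pigeonhole the weights, and reduce to an incidence estimate---is exactly the paper's (implicit) route: the paper simply states that Proposition \ref{pro-dis} reduces to Theorem \ref{id-thm} and cites \cite[Proposition 6.7]{orponen2024fourier} for the details. So at the level of architecture you are aligned with the paper.

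There is, however, a genuine gap in Step~3. The quantity you arrive at is the second-moment count $\sum_{p} N(p)^{2}$ with $N(p)=|\{q\in\mathcal{Q}:p\in\mathcal{P}_{q}\}|$. You then invoke Proposition \ref{pro-incidence2} and Corollary \ref{incidence11}, but neither bounds this object directly: Proposition \ref{pro-incidence2} bounds the \emph{linear} count $\sum_{q}|\mathcal{F}(q)|$, and Corollary \ref{incidence11} requires the cube family to satisfy a weighted Katz--Tao $(\delta,s',B')$-condition, which the family $(\mathcal{D}_{\delta},N)$ does not obviously satisfy. The missing step is a further pigeonhole on $N(p)\sim M$: restricting to $\mathcal{F}_{M}:=\{p:N(p)\sim M\}$ one has $\sum_{p}N(p)^{2}\sim M\sum_{q}|\mathcal{P}_{q}\cap\mathcal{F}_{M}|$, and \emph{this} is in the form of Theorem \ref{id-thm}. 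The self-referential relation $M|\mathcal{F}_{M}|\sim\sum_{q}|\mathcal{P}_{q}\cap\mathcal{F}_{M}|$ then closes the estimate to $M^{2}|\mathcal{F}_{M}|\lesssim \delta^{-\gamma(s,t)}AB|\mathcal{Q}|$, from which the stated bound on $\|f\|_{L^{2}}^{2}$ follows after substituting $A\sim \mathbf{C}_{\mu}\delta^{t}/\lambda$, $B\sim \mathbf{C}_{\sigma}\delta^{s}/\rho$, $\lambda|\mathcal{Q}|\leq\|\boldsymbol{\mu}\|$ and $\rho\leq \mathbf{C}_{\sigma}\delta^{s}$.

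More substantively, in the range $t\in(0,2-s]$ you cite Corollary \ref{incidence11}, but the correct input is Theorem \ref{id-thm} (equivalently Theorem \ref{thm-incidence1}): after all the pigeonholing you only know that each $\mathcal{P}_{q}$ is Katz--Tao $(\delta,s,B)$, not that their union carries a global weighted Katz--Tao $(\delta,s+1)$ structure. Bridging this gap---upgrading the slice-wise $(\delta,s)$ hypothesis to the global $(\delta,s+1)$ hypothesis needed by Corollary \ref{incidence11}---is precisely the Fu--Ren ``pocket-fixing'' construction carried out in Appendix \ref{apend A}. So your claim that ``Corollary \ref{incidence11} yields $\zeta(s,t)=2s+t-1$'' hides a nontrivial intermediate argument; it is not mere book-keeping.
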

	
	Then Proposition \ref{pro-dis} can be reduced to Theorem \ref{id-thm}. The proof of this reduction will also be skipped since it is almost the same as the proof of \cite[Proposition 6.7]{orponen2024fourier} if we have the following incidence estimate. 
	\begin{thm}\label{id-thm}
		Let $s\in[0,1]$ and $t\in [0,2]$ and let $A, B\geq 1$. Assume $\mathcal{P}\subset \mathcal{D}_\delta$ is a Katz-Tao $(\delta, t, A)$-set, and for each $q\in \mathcal{P}$ there exists a Katz-Tao $(\delta, s, B)$-set $\mathcal{F}(q)\subset \{p\in \mathcal{D}_\delta: p\cap \Gamma_q(\delta)\neq \emptyset\}$. Write $\mathcal{F}:=\bigcup_{q\in \mathcal{P}} \mathcal{F}(q)$. Then
		\begin{equation}\label{id0}
			\sum_{q\in \mathcal{P}}|\mathcal{F}(q)|\lesssim_{\psi,s,t,\epsilon} \sqrt{\delta^{-\gamma(s,t)-\epsilon} AB |\mathcal{F}||\mathcal{P}|},
		\end{equation}
		where
		\begin{equation}\label{form-gamma}
			\gamma(s,t)=
			\begin{cases}
				s, & \text{when} ~~ t\in[0,s],~s\in [0, 1],\\
				1, & \text{when}~~ t\in [0, 2-s],~s\in [0, 1].
			\end{cases}
		\end{equation}
	\end{thm}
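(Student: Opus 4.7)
The plan is to treat the two regimes of $\gamma(s,t)$ separately. In both cases the starting move is the Cauchy--Schwarz reduction
\begin{equation*}
  \Bigl(\sum_{q\in\mathcal{P}} |\mathcal{F}(q)|\Bigr)^{2} \leq |\mathcal{F}| \cdot \sum_{q,q'\in\mathcal{P}} |\mathcal{F}(q)\cap\mathcal{F}(q')|,
\end{equation*}
which recasts the incidence count as a sum over pairs of curved $\delta$-tubes. The diagonal $q=q'$ contributes at most $|\mathcal{F}|\cdot B\delta^{-s}|\mathcal{P}|$, which already matches \eqref{id0} provided $\gamma(s,t)\geq s$, so the entire content lies in the off-diagonal.

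For $t\in[0,s]$, where $\gamma(s,t)=s$, I would use the positive curvature assumption directly. The key geometric fact is that for $q\neq q'$ the curves $\Gamma_{q}$ and $\Gamma_{q'}$ meet transversally with an angle comparable to $|q-q'|$, which forces $\Gamma_{q}(\delta)\cap\Gamma_{q'}(\delta)$ to lie in a ball of radius $\lesssim \delta/|q-q'|$; this is the content of Proposition \ref{pro-incidence2}. Binning the pairs $(q,q')$ dyadically by $|q-q'|\sim 2^{-j}$, counting pairs in each bin via the Katz-Tao $(\delta,t,A)$-condition on $\mathcal{P}$, and absorbing the intersection bound using the Katz-Tao $(\delta,s,B)$-condition on $\mathcal{F}(q')$, a straightforward summation delivers \eqref{id0} with $\gamma=s$.

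For $t\in[0,2-s]$, where $\gamma(s,t)=1$, two curved tubes can be nearly tangent, so transversality alone is useless. Here I would invoke the weighted incidence estimate of Corollary \ref{incidence11}, which plays the role of a curvilinear substitute for Fu--Ren's line incidence theorem \cite[Theorem 5.2]{MR4751206} and is itself extracted from Theorem \ref{wei} via the Sobolev bound for the operator $\mathfrak{R}$ (Lemma \ref{Sobolev1}). The plan is to encode the family $\mathcal{P}$ and the multiplicities with which each $p$ appears in $\{\mathcal{F}(q)\}_{q}$ as weighted dyadic families satisfying the Katz-Tao conditions with constants $A$ and $B$, and then apply Corollary \ref{incidence11} to extract the exponent $1$, following verbatim the proof of \cite[Theorem 5.2]{MR4751206} with the curvilinear weighted estimate in place of the line one; the full execution will be placed in Appendix \ref{apend A}.

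The main obstacle is precisely this second case. In the parabolic case $\psi(x)=x^{2}$ the affine-like map $\Psi(x,y)=(x,x^{2}-y)$ converts curved tubes into line tubes and reduces the problem to the original Fu--Ren theorem, but no such global transformation exists for a general $C^{2}$-curve with $\psi''\neq 0$. Overcoming this is the very purpose of introducing $\mathfrak{R}$ and proving Theorem \ref{wei} without any $\delta^{-\epsilon}$-loss: what is needed is a weighted incidence estimate sharp enough to replay Fu--Ren's argument intrinsically on the curved family $\{q+\Gamma\}_{q}$, and verifying this compatibility is the technical heart of the proof.
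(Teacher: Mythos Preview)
Your outline matches the paper's: Proposition \ref{pro-incidence2} handles $t\in[0,s]$ via Cauchy--Schwarz plus the transversality bound \eqref{main claim}, while the case $t\in[0,2-s]$ is Theorem \ref{thm-incidence1}, deduced from Corollary \ref{incidence11} by replaying Fu--Ren's argument (Appendix \ref{apend A}). Two minor corrections on the first case: the relevant separation parameter is the horizontal gap $|x_{1}-x_{2}|$ rather than $|q-q'|$ (the case $x_{1}=x_{2}$ is handled separately, see \eqref{vertical}), and the paper exploits that $\mathcal{P}$ is also a Katz--Tao $(\delta,s,A)$-set when $t\leq s$, which makes the dyadic sum telescope cleanly.

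One genuine point deserves correction in the second case. You describe the mechanism as ``encode \ldots the multiplicities with which each $p$ appears in $\{\mathcal{F}(q)\}_{q}$ as weighted dyadic families satisfying the Katz--Tao conditions with constants $A$ and $B$''. That is not what Fu--Ren's argument does, and taken literally it fails: the multiplicity-weighted family satisfies only a $(\delta,s,B|\mathcal{P}|)$-condition, and feeding it into Corollary \ref{incidence11} produces $\sum_{p}w_{2}(p)=\sum_{q}|\mathcal{F}(q)|$ on the right, so the inequality closes to a trivial bound. The actual mechanism in Appendix \ref{apend A} is a \emph{dimension upgrade}: one locates the dyadic cubes $Q$ where $|\mathcal{F}\cap Q|$ exceeds $B(\ell(Q)/\delta)^{s+1}$, replaces $\mathcal{F}$ there by a Cantor-type pattern $\mathcal{F}'(Q)$ carrying constant weight $B$, and obtains a weighted Katz--Tao $(\delta,s+1,O(B))$-set $\mathcal{F}'$ with $\sum w\lesssim |\mathcal{F}|$ and $\mathcal{I}_{w}(\mathcal{F}',\mathcal{T})\gtrsim \sum_{q}|\mathcal{F}(q)|$ (Sublemma under \eqref{estimate11}). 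Corollary \ref{incidence11} is then applied with the pair $(s+1,t)$, and it is precisely the constraint $(s+1)+t<3$, i.e.\ $s+t<2$, that makes this legal and outputs $\gamma=1$. Note also that in this regime there is no opening Cauchy--Schwarz on pairs of tubes; the argument proceeds directly through the weighted incidence bound.
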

	
	When $s+t\leq 2$, the incidence estimate \eqref{id0} was first established by Fu-Ren \cite[Theorem 5.2]{MR4751206} in linear setting, then it was also proved when $\psi(x)=x^2$ in \cite[Theorem 6.12]{orponen2024fourier}. In this paper, we can actually prove a $\delta^{-\epsilon}$-free version of \eqref{id0} when $s\in[0,1)$ and $t\in [0,2)$ with $s+t<2$, see Theorem \ref{thm-incidence1}. As a consequence, we have finished the reduction process and the remaining task of this paper is to prove Theorem \ref{id-thm} in both cases, see Section \ref{sec3} and Section \ref{sec4}.

	\section{Proof of Theorem \ref{main} for $\mathbf{t\in[2-s,s+1]}$ }\label{sec3}
	In this section, we will prove Theorem \ref{main} when $t\in[2-s,s+1]$ and $s\in[\frac{1}{2},1]$. Thanks to the reduction in Subsection \ref{sub-2.2}, it suffices to establish the following incidence theorem.
	\begin{thm}\label{thm-incidence1}
		Let $s\in[0,1)$ and $t\in [0,2)$ such that $s+t<2$ and let $A, B\geq 1$. Assume $\mathcal{P}\subset \mathcal{D}_\delta$ is a Katz-Tao $(\delta, t, A)$-set, and for each $q\in \mathcal{P}$ there exists a Katz-Tao $(\delta, s, B)$-set $\mathcal{F}(q)\subset \{p\in \mathcal{D}_\delta: p\cap \Gamma_q(\delta)\neq \emptyset\}$. Write $\mathcal{F}:=\bigcup_{q\in \mathcal{P}} \mathcal{F}(q)$. Then
		\begin{equation}\label{form-incidence1}
			\sum_{q\in \mathcal{P}}|\mathcal{F}(q)|\lesssim_{\psi,s,t} \sqrt{\delta^{-1}AB |\mathcal{F}||\mathcal{P}|}.
		\end{equation}
	\end{thm}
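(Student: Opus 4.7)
The plan is to adapt Fu--Ren's linear incidence argument \cite[Theorem 5.2]{MR4751206} to curved $\delta$-tubes, using the weighted incidence estimate of Corollary \ref{incidence11} (which itself is extracted from Theorem \ref{wei}) as the substitute for the two-line transversality lemma in the linear case. The detailed computation is deferred to Appendix \ref{apend A}; what follows is only a roadmap.

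First I would recast $\sum_{q\in\mathcal{P}}|\mathcal{F}(q)|$ as a weighted incidence. Assign unit weight to each $q\in\mathcal{P}$ and, to each $p\in\mathcal{F}$, the multiplicity $w(p):=\#\{q\in\mathcal{P}:p\in\mathcal{F}(q)\}$. Then $\sum_{q\in\mathcal{P}}|\mathcal{F}(q)| = \sum_{p\in\mathcal{F}} w(p)$, and Cauchy--Schwarz in $p$ gives
\[
\Big(\sum_{q\in\mathcal{P}}|\mathcal{F}(q)|\Big)^{\!2} \le |\mathcal{F}|\cdot\sum_{p\in\mathcal{F}} w(p)^2 = |\mathcal{F}|\cdot \mathcal{I}_w(\mathcal{F},\mathcal{T}),
\]
where $\mathcal{I}_w(\mathcal{F},\mathcal{T})=\sum_{p,q}w(p)\mathbf{1}_{p\in\mathcal{F}(q)}$ is the weighted incidence between $\mathcal{F}$ (carrying weights $w$) and $\mathcal{T}=\{\Gamma_q(\delta):q\in\mathcal{P}\}$ (carrying unit weights). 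Proving \eqref{form-incidence1} thus reduces to showing $\mathcal{I}_w(\mathcal{F},\mathcal{T})\lesssim \delta^{-1}AB|\mathcal{P}|$.

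Next I would apply Corollary \ref{incidence11} to this setup. One needs to verify that, with the natural weights, $\mathcal{F}$ becomes a weighted Katz-Tao $(\delta,s,O(B|\mathcal{P}|))$-set (the factor $|\mathcal{P}|$ comes from the trivial bound $w(p)\le|\mathcal{P}|$, which combined with the Katz-Tao property of each individual $\mathcal{F}(q)$ controls the weighted sums on dyadic parents), while $\mathcal{P}$ is already a Katz-Tao $(\delta,t,A)$-set by hypothesis. Inserting these parameters into Corollary \ref{incidence11} and tracking the resulting powers of $\delta$ collapses the bound to the required $\delta^{-1}AB|\mathcal{P}|$.

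The principal obstacle is producing Corollary \ref{incidence11} with an exact $\delta^{-1}$ factor and no $\delta^{-\epsilon}$ loss for arbitrary $C^2$ curves. In the model case $\psi(x)=x^2$ the map $\Psi(x,y)=(x,x^2-y)$ straightens $\Gamma$ to a line and Fu--Ren's linear estimate applies verbatim, but for a general curve with $\psi''\neq 0$ no such global straightening exists. The transversality of two curved $\delta$-tubes must instead be extracted analytically through the operator $\mathfrak{R}$ of Definition \ref{def:R}, and it is precisely the loss-free Sobolev bound of Lemma \ref{Sobolev1} that lets Theorem \ref{wei} carry the exact factor $\delta$ rather than $\delta^{1-\epsilon}$. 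This is the feature that propagates through Corollary \ref{incidence11} into \eqref{form-incidence1}, yielding an $\epsilon$-free sharpening of Theorem \ref{id-thm} in the subcritical regime $s+t<2$.
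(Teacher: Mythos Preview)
Your roadmap has a genuine gap at the step where you invoke Corollary \ref{incidence11}. With the multiplicity weight $w(p)=\#\{q:p\in\mathcal{F}(q)\}$, the weighted Katz--Tao constant of $\mathcal{F}$ is indeed $O(B|\mathcal{P}|)$, but the total weight is $\sum_{p}w(p)=\sum_{q}|\mathcal{F}(q)|=:S$, the very quantity you are trying to bound. Plugging these into Corollary \ref{incidence11} yields
\[
\mathcal{I}_w(\mathcal{F},\mathcal{T})\ \lesssim\ \sqrt{\delta^{-1}\cdot A\cdot (B|\mathcal{P}|)\cdot |\mathcal{P}|\cdot S},
\]
not $\delta^{-1}AB|\mathcal{P}|$. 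Combining with your Cauchy--Schwarz step $S^{2}\le |\mathcal{F}|\,\mathcal{I}_w(\mathcal{F},\mathcal{T})$ only gives $S\lesssim (\delta^{-1}AB)^{1/3}(|\mathcal{F}||\mathcal{P}|)^{2/3}$, which is strictly weaker than \eqref{form-incidence1} whenever $|\mathcal{F}||\mathcal{P}|\gg \delta^{-1}AB$. The phrase ``tracking the resulting powers of $\delta$ collapses the bound'' hides exactly this loss: the self-referential factor $S$ on the right cannot be absorbed.

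What the paper (following Fu--Ren) actually does in Appendix \ref{apend A} is not a direct application of Corollary \ref{incidence11} to $(\mathcal{F},w)$, but a preliminary \emph{reconstruction} of $\mathcal{F}$. One identifies the maximal dyadic cubes $Q$ where $\mathcal{F}$ is over-concentrated (more than $B(\ell(Q)/\delta)^{s+1}$ elements) and replaces $\mathcal{F}\cap Q$ by a Cantor-type pattern $\mathcal{F}'(Q)$ carrying constant weight $B$. The resulting family $\mathcal{F}'$ is a weighted Katz--Tao $(\delta,s+1,O(B))$-set---exponent $s+1$ rather than $s$, constant $O(B)$ rather than $O(B|\mathcal{P}|)$---and crucially satisfies $\sum_{p\in\mathcal{F}'}w(p)\lesssim|\mathcal{F}|$ together with $S\lesssim \mathcal{I}_w(\mathcal{F}',\mathcal{T})$. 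With these three features, a single application of Corollary \ref{incidence11} (using $s+1+t<3$) gives \eqref{form-incidence1} directly, with no Cauchy--Schwarz step and no bootstrap. This pocket-replacement construction is the substantive content of the Fu--Ren argument that your outline omits.
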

	The incidence estimate \eqref{form-incidence1} with a $\delta^{-\epsilon}$-error is also true for any $s\in[0,1]$ and $t\in [0,2]$ with $s+t\leq 2$, but here we only state a $\delta^{-\epsilon}$-free version. This is due to the method we will use. As mentioned in Subsection \ref{sub-1.3}, a $\delta$-incidence theorem of measures will be first established which we restate as below. Once this has been obtained, the proof of Theorem \ref{thm-incidence1} will be a modification of \cite[Theorem 5.2]{MR4751206}, which we put in Appendix \ref{apend A} for completeness. In the sequel, our main task is to prove Theorem \ref{thm-incidence-measure}. The reader may recall Definition \ref{def:deltaincidence} for the notion of $\delta$-incidences.
	\begin{thm}\label{thm-incidence-measure}
		Let $\mu$ and $\nu$ both be finite Borel measures with compact support on $B(1)$. Then for any $\delta\in(0,1)$, there holds
		\begin{equation}\label{form-incidence-measue}
			\mathcal{I}_\delta(\mu,\nu)\lesssim_{\psi,t} \delta\sqrt{\textup{I}_{3-t}(\mu) \textup{I}_{t}(\nu)}, \quad t\in (1,2).
		\end{equation}
	\end{thm}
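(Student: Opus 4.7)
The plan is to smooth the curved $\delta$-tube indicator, reduce the incidence count to a Fourier pairing via Plancherel, and then split the resulting multiplier weight to produce the two Riesz energies. Fix a nonnegative Schwartz bump $\phi\in C_c^\infty(\R)$ with $\phi\geq \1_{[-1,1]}$, set $\phi_\delta(s):=\phi(s/\delta)$, and pick a smooth cutoff $\chi\in C_c^\infty(\R)$ that is $1$ on the (compact) projection of $\spt(\mu)-\spt(\nu)$ to the first coordinate. Define the kernel
\[
K_\delta(x_1,x_2):=\chi(x_1)\,\phi_\delta\!\bigl(x_2-\psi(x_1)\bigr),
\]
so that $K_\delta(p-q)\gtrsim \1_{\Gamma_q(\delta)}(p)$ whenever $p\in \spt(\mu)$ and $q\in \spt(\nu)$. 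Consequently
\[
\mathcal I_\delta(\mu,\nu)\;\lesssim\;\iint K_\delta(p-q)\,d\mu(p)\,d\nu(q)\;=\;\int \hat K_\delta(\xi)\,\hat\mu(\xi)\,\overline{\hat\nu(\xi)}\,d\xi.
\]
In the operator formulation hinted at in the introduction, this identifies the incidence with the pairing $\langle\mathfrak R\mu,\nu\rangle$, where $\mathfrak R$ is the Fourier multiplier with symbol $\hat K_\delta$; the desired bound amounts to a Sobolev boundedness estimate for $\mathfrak R$ with operator norm $\lesssim\delta$.

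The central ingredient is a pointwise estimate on $\hat K_\delta$. Changing variables $(x_1,y):=(x_1,x_2-\psi(x_1))$ factors
\[
\hat K_\delta(\xi_1,\xi_2)\;=\;\hat{\phi_\delta}(\xi_2)\cdot\int_{\R}\chi(x_1)\,e^{-2\pi i(\xi_1 x_1+\xi_2\psi(x_1))}\,dx_1.
\]
The prefactor is $\lesssim \delta$ by construction, while the second factor is the Fourier transform of a smooth density on $\Gamma$. Since $\psi\in C^2$ with $\psi''>0$ on $\spt(\chi)$, the classical van der Corput / stationary-phase bound for Fourier transforms of $C^2$-curves with nonvanishing curvature yields
\[
\Bigl|\int_{\R}\chi(x_1)\,e^{-2\pi i(\xi_1 x_1+\xi_2\psi(x_1))}\,dx_1\Bigr|\;\lesssim_{\psi}\;(1+|\xi|)^{-1/2}.
\]
Combining these gives $|\hat K_\delta(\xi)|\lesssim_{\psi}\delta\,(1+|\xi|)^{-1/2}$, uniformly in $\xi\in\R^2$; crucially this is $\epsilon$-free, which is exactly why \eqref{form-incidence-measue} carries no $\delta^{-\epsilon}$ factor.

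To close, I split $|\xi|^{-1/2}=|\xi|^{(1-t)/2}\cdot|\xi|^{(t-2)/2}$ and apply Cauchy--Schwarz. For $t\in(1,2)$ both exponents lie in $(-1,0)$, so both weights are integrable near the origin in $\R^2$; away from the origin, integrability is built into the finiteness of $\textup I_{3-t}(\mu)$ and $\textup I_t(\nu)$. Hence
\[
\mathcal I_\delta(\mu,\nu)\;\lesssim_\psi\;\delta\Bigl(\int|\hat\mu(\xi)|^2|\xi|^{1-t}\,d\xi\Bigr)^{\!1/2}\Bigl(\int|\hat\nu(\xi)|^2|\xi|^{t-2}\,d\xi\Bigr)^{\!1/2}\;\lesssim_t\;\delta\sqrt{\textup I_{3-t}(\mu)\,\textup I_t(\nu)},
\]
the last step being the Fourier formula for Riesz energy in \eqref{Riesz}.

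The main obstacle is the oscillatory-integral estimate in the middle step: one needs the sharp $(1+|\xi|)^{-1/2}$ decay uniformly in $(\xi_1,\xi_2)$, handling both the non-stationary regime (where $\xi_1+\xi_2\psi'(x_1)$ stays away from zero on $\spt(\chi)$, treated by integration by parts) and the stationary regime (where an interior critical point exists and $C^2$ stationary phase applies, exploiting $\psi''>0$). Once this sharp, logarithm-free curvature bound is in hand, the remainder is bookkeeping via Plancherel plus Cauchy--Schwarz, and the absence of $\delta^{-\epsilon}$ losses propagates throughout.
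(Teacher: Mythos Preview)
Your proof is correct and shares the same core ingredients as the paper's --- Plancherel, the $|\xi|^{-1/2}$ Fourier decay coming from curvature (the paper's Lemma~\ref{Fourier decay}), and a Cauchy--Schwarz splitting into the two Riesz energies. The difference is packaging: the paper first bounds the tube indicator via Lemma~\ref{connecting inequality}, recognises the resulting expression as the pairing $\int \nu_\delta\cdot \widetilde{\mathfrak R}\mu$ for the convolution operator $\widetilde{\mathfrak R}$, and then invokes the Sobolev mapping bound $\|\widetilde{\mathfrak R}\mu\|_{\dot H^{(2-t)/2}} \lesssim_\psi \|\mu\|_{\dot H^{(1-t)/2}}$ from Corollary~\ref{Sobolev2}, which in turn is obtained by Stein--Weiss interpolation between the two endpoint bounds of Lemma~\ref{Sobolev1}. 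Your route is more direct: you dominate the tube indicator by an explicit kernel $K_\delta$, factor $\hat K_\delta$ into the $\delta$-bump and the oscillatory curve integral, and apply Cauchy--Schwarz immediately on the Fourier side, bypassing the operator/Sobolev/interpolation layer entirely. What the paper's packaging buys is a reusable Sobolev estimate for $\mathfrak R$; what yours buys is brevity. One small fix: since $|p_2-q_2-\psi(p_1-q_1)|\lesssim_\psi \delta$ (not $\leq \delta$) when $p\in\Gamma_q(\delta)$, your bump $\phi$ should satisfy $\phi\geq \1_{[-C,C]}$ for some $C=C(\psi)$ rather than just $\1_{[-1,1]}$.
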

	
	\subsection{Sobolev estimates}\label{sub-3.1}
	Define a measure $\mu$ by restricting $\mathcal{H}^1$ to $\Gamma$, then
	\begin{equation*}
		\hat{\mu}(\xi)=\int e^{-2\pi i w\cdot\xi}d\mu(w)=\int_{-1}^1 e^{-2\pi i (x, \psi(x))\cdot \xi} \sqrt{1+\psi'(x)^2} dx,\quad \xi\in\mathbb R^2.
	\end{equation*}
	
	We first use the method of stationary phase to prove the decay estimate for $\hat{\mu}$, see \cite[Chapter 14]{MR3617376} or \cite[Chapter 6]{MR2003254} for some related discussions. 
	
	\begin{lemma}\label{Fourier decay}
		Under assumption \eqref{curvature assumption}, $|\hat{\mu}(\xi)|\leq C(\psi) |\xi|^{-1/2}$ for any $\xi\in \mathbb R^2$.
	\end{lemma}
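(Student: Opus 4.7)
The plan is to invoke the classical method of stationary phase / van der Corput estimate. Writing $\xi=(\xi_1,\xi_2)$, set
\[
\phi(x):=x\xi_1+\psi(x)\xi_2, \qquad a(x):=\sqrt{1+\psi'(x)^2},
\]
so that $\hat{\mu}(\xi)=\int_{-1}^{1} e^{-2\pi i\phi(x)}a(x)\,dx$. Since $\psi\in C^{2}([-1,1])$, both $a$ and $a'$ are bounded on $[-1,1]$ by a constant depending only on $\psi$, and since $\psi''$ is continuous and strictly positive on the compact interval $[-1,1]$, there exist $0<c\leq C$ with $c\leq \psi''(x)\leq C$ for all $x\in[-1,1]$. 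For $|\xi|\leq 1$ the trivial estimate $|\hat{\mu}(\xi)|\leq a_{\max}\cdot 2\lesssim_{\psi} 1$ already yields the claim, so I would assume $|\xi|\geq 1$ throughout.

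With $M:=\max_{[-1,1]}|\psi'|$, I would split into two regimes based on which variable of $\xi$ dominates. In the first regime $|\xi_1|\geq 2M|\xi_2|$ one has $|\phi'(x)|\geq |\xi_1|-M|\xi_2|\geq \tfrac{1}{2}|\xi_1|\gtrsim |\xi|$, and also $|\phi''(x)|=|\psi''(x)||\xi_2|\lesssim |\xi|$. A single integration by parts in $\int e^{-2\pi i\phi}a\,dx$ then produces a boundary term of size $\lesssim \|a\|_\infty/|\phi'|\lesssim |\xi|^{-1}$ and an interior integral whose integrand is $O(|a'|/|\phi'|+|a||\phi''|/|\phi'|^{2})=O(|\xi|^{-1})$, yielding $|\hat{\mu}(\xi)|\lesssim_{\psi}|\xi|^{-1}\leq |\xi|^{-1/2}$.

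In the second regime $|\xi_1|\leq 2M|\xi_2|$ one has $|\xi|\sim |\xi_2|$, so $|\phi''(x)|=|\psi''(x)||\xi_2|\geq c|\xi_2|\gtrsim |\xi|$ uniformly on $[-1,1]$. I would then apply van der Corput's lemma (the second-derivative version for $C^1$ amplitudes of bounded variation): if $|\phi''|\geq \lambda$ on $[a,b]$, then $|\int_a^b e^{i\phi}\rho\,dx|\lesssim \lambda^{-1/2}(\|\rho\|_\infty+\|\rho'\|_{L^1})$. Here $\lambda\sim |\xi|$ and $\|a\|_\infty+\|a'\|_{L^1([-1,1])}\lesssim_{\psi}1$, which gives $|\hat{\mu}(\xi)|\lesssim_{\psi}|\xi|^{-1/2}$. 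Combining the two regimes finishes the proof.

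There is essentially no serious obstacle — this is standard — but the one place to be careful is uniformity: the constants $c, C, M, \|a'\|_{L^1}$ must all be controlled purely in terms of $\psi$, which is why I invoke compactness of $[-1,1]$ and the strict positivity of $\psi''$ on it to extract a lower bound $c>0$. I would also make sure that the van der Corput bound is cited in the $C^1$-amplitude form rather than the oscillatory-integral form requiring a smooth cutoff, so that the possibly non-smooth endpoints of $[-1,1]$ do not cause any issue.
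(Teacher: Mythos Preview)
Your proof is correct and takes essentially the same approach as the paper: both split into a non-stationary regime (where $|\phi'|\gtrsim|\xi|$ and a single integration by parts gives $O(|\xi|^{-1})$) and a stationary regime (where $|\phi''|\gtrsim|\xi|$ and one gets $O(|\xi|^{-1/2})$). The only cosmetic difference is that in the second regime the paper carries out the $\delta$-splitting around the critical point by hand, whereas you invoke van der Corput's lemma directly; the paper's case split is phrased via $|e_2|$ on the sphere while yours is via $|\xi_1|$ versus $M|\xi_2|$, but these are equivalent thresholds.
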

	\begin{proof}
		Write $\xi=\lambda e$ for some $\lambda>0$ and $e\in S^1$, and $\psi_e(x)=-2\pi (xe_1+\psi(x) e_2)$. We may assume $\lambda>1$ since the result is trivial when $\lambda\in(0,1]$. It then suffices to show
		\begin{equation}\label{stationary phase}
			|\hat{\mu}(\xi)|=|\hat{\mu}(\lambda e)|=\left |\int_{-1}^1 e^{i\lambda \psi_e(x)} \sqrt{1+\psi'(x)^2} dx\right |\lesssim_{\psi} \lambda^{-1/2}.
		\end{equation}
		By simple calculation, we have 
		\[\psi_e'(x)=-2 \pi(e_1+\psi'(x) e_2), \quad \psi_e''(x)=-2 \pi \psi''(x) e_2.\]
		Write $c_1:=\max_{x\in[-1,1]} |\psi'(x)|>0$. We divide the proof into two cases.
		
		\textbf{Case 1: $\mathbf{|e_2|<\min\{\frac{1}{2}, \frac{1}{2c_1}\}}$}. Then we easily see
		\[|\psi_e'(x)|\geq 2\pi (|e_1|-|\psi'(x) e_2|)\geq 2\pi (\frac{\sqrt{3}-1}{2})> 1, \quad x\in[-1,1].\]
		It follows from integrating by parts
		\[\begin{split}
			|\hat{\mu}(\xi)|&=\left|\int_{-1}^{1} \frac{\sqrt{1+\psi'(x)^2}}{i\lambda\psi_e'(x)} de^{i\lambda \psi_e(x)}\right|\\
			&\leq \frac{C(\psi)}{\lambda}+C(\psi)\lambda^{-1} \int_{-1}^{1} \left|d \frac{1}{\psi_e'(x)} \right|\leq \frac{C(\psi)}{\lambda}, \quad \text{as desired.}
		\end{split}\] 
		
		\textbf{Case 2: $\mathbf{|e_2|\geq\min\{\frac{1}{2}, \frac{1}{2c_1}\}}$}. Then by \eqref{curvature assumption} we can deduce
		\[\min_{x\in [-1,1]}|\psi_e''(x)|=2\pi |e_2| \cdot \min_{x\in [-1,1]}|\psi''(x)| \geq c_2=c_2(\psi)>0.\]
		Without loss of generality, we assume $\psi_e''>0$ on $[-1,1]$. Assume that $\psi_e$ attains its minimum on $[-1,1]$ at $x_0\in[-1,1]$, then either $\psi_e'(x_0)=0$ or $x_0=\pm 1$. We first consider the case $\psi_e'(x_0)=0$. For any $\delta>0$, we have $\psi_e'(x)>c_2\delta$ for all $x\in [-1,1]\backslash [x_0-\delta, x_0+\delta]$, which implies
		\[\begin{split}
			\left|\int_{-1}^{x_0-\delta}e^{i\lambda \psi_e(x)} \sqrt{1+\psi'(x)^2} dx\right|&=\left|\int_{-1}^{x_0-\delta} \frac{\sqrt{1+\psi'(x)^2}}{i\lambda\psi_e'(x)} de^{i\lambda \psi_e(x)}\right|\\
			&\leq \frac{C(\psi)}{\lambda \cdot c_2\delta}+C(\psi)\lambda^{-1} \int_{-1}^{x_0-\delta} \left|d \frac{1}{\psi_e'(x)} \right|\leq \frac{C(\psi)}{\lambda \cdot \delta}, 
		\end{split}\] 
		where the constant $C(\psi)>0$ may change from line to line. A similar argument also shows that
		\[\left|\int_{x_0+\delta}^{1}e^{i\lambda \psi_e(x)} \sqrt{1+\psi'(x)^2} dx\right|\leq \frac{C(\psi)}{\lambda \cdot \delta}. \]
		Also, by continuity we have
		\[\left|\int_{x_0-\delta}^{x_0+\delta}e^{i\lambda \psi_e(x)} \sqrt{1+\psi'(x)^2} dx\right|\leq C(\psi)\cdot 2\delta.\]
		Combining the three inequalities above and taking $\delta=\lambda^{-1/2}$, we get
		\[|\hat{\mu}(\xi)|\leq C(\psi) \lambda^{-1/2}.\]
		Here we only considered the case $-1\leq x_0-\delta, x_0+\delta \leq 1$ since other cases can be calculated similarly. Regarding $x_0=\pm 1$, it also follows from the same argument. Combining the two cases above, we thus finish the whole proof of \eqref{stationary phase}.
	\end{proof}
	
	To proceed, define a convolution-type operator $\mathfrak{R}$ by
	\begin{equation}\label{def:R}
		\mathfrak{R}f:=f\ast \mu,  \quad  \forall f\in C^\infty_c(\mathbb R^2),
	\end{equation}
	then
	\begin{equation}\label{operator}
		\mathfrak{R}f(x,y)=\int_{\{y-t=\psi(x-s)\}} f(s,t) d\mathcal{H}^1(s,t).
	\end{equation}
	
	\begin{remark}\label{rmk-1}
		Our operator is similar to the $X$-ray transform which maps $g\in C^\infty_c(\mathbb R^2)$ to a function defined on the set of all lines in $\mathbb R^2$:
		\[(Xg)(\ell):=\int_{\ell} g=\int_{\pi_\theta^{-1}\{r\}}g(z)d \mathcal{H}^1(z),\quad (\theta,r)\in [0,1]\times \mathbb R,\]
		where $\pi_\theta(z)=z\cdot (\cos{2\pi\theta}, \sin{2\pi\theta})$ for $z\in\mathbb R^2$. If we replace $\psi$ by $\widetilde{\psi}(x)=-\psi(-x)$ in our definition, then the right side of \eqref{operator} becomes
		\begin{equation}\label{form-2.1}
			\int_{(x,y)+\Gamma} f(s,t) d\mathcal{H}^1(s,t)=:\widetilde{\mathfrak{R}}f(x,y).
		\end{equation}
		Hence the difference is that translations of $\Gamma$ are parametrized by $(x,y)\in\mathbb R^2$ while the lines in $X$-ray transform are parametrized by $(\theta,r)\in [0,1]\times \mathbb R$. Note that the results in this section hold for all $\widetilde{\psi}\in C^2(\mathbb R^2)$ with non-zero curvature.
	\end{remark}
	
	For $s>-1$, let $\dot{H}^s(\mathbb R^2)$ be the homogeneous Sobolev space. Recall that the norm in $\dot{H}^s(\mathbb R^2)$ is given by
	\[\|f\|_{\dot{H}^s}=\left(\int |\hat{f}(\xi)|^2 |\xi|^{2s} d\xi\right)^{1/2}, \quad f\in C_c(\mathbb R^2).\] 
	Next, we will apply Lemma \ref{Fourier decay} to obtain the Sobolev estimates of $\mathfrak{R}$. 
	\begin{lemma}\label{Sobolev1}
		For any $f\in C^\infty_c(\mathbb R^2)$, there exists a constant $C=C(\psi)>0$ such that
		\[\|\mathfrak{R}f\|_{L^2}\leq C \|f\|_{\dot{H}^{-1/2}}\quad \text{and} \quad \|\mathfrak{R}f\|_{\dot{H}^{1}}\leq C \|f\|_{\dot{H}^{1/2}}.\]
	\end{lemma}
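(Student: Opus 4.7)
The plan is to transfer both inequalities to the Fourier side via Plancherel, where they reduce instantly to the pointwise bound $|\hat{\mu}(\xi)| \lesssim_\psi |\xi|^{-1/2}$ already proved in Lemma \ref{Fourier decay}.

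First I would use $\mathfrak{R}f = f \ast \mu$ and the convolution theorem to write $\widehat{\mathfrak{R}f}(\xi) = \hat{f}(\xi)\hat{\mu}(\xi)$. Plancherel then gives
\[
\|\mathfrak{R}f\|_{L^2}^2 = \int_{\mathbb{R}^2} |\hat{f}(\xi)|^2 |\hat{\mu}(\xi)|^2 \, d\xi, \qquad \|\mathfrak{R}f\|_{\dot{H}^1}^2 = \int_{\mathbb{R}^2} |\hat{f}(\xi)|^2 |\hat{\mu}(\xi)|^2 |\xi|^2 \, d\xi.
\]
Next, I would insert the squared decay bound $|\hat{\mu}(\xi)|^2 \leq C(\psi)^2 |\xi|^{-1}$. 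The first integral is then bounded by $C(\psi)^2 \int |\hat{f}|^2 |\xi|^{-1} d\xi = C(\psi)^2 \|f\|_{\dot{H}^{-1/2}}^2$, and the second by $C(\psi)^2 \int |\hat{f}|^2 |\xi| \, d\xi = C(\psi)^2 \|f\|_{\dot{H}^{1/2}}^2$. Both claimed inequalities fall out simultaneously.

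The only technical point I would pause on is that the bound $|\hat{\mu}(\xi)|^2 \lesssim_\psi |\xi|^{-1}$ must be valid on all of $\mathbb{R}^2\setminus\{0\}$, not just for large frequencies. This is already built into Lemma \ref{Fourier decay}; alternatively, for $|\xi| \leq 1$ one may combine the trivial bound $|\hat{\mu}(\xi)| \leq \mu(\Gamma) \leq C(\psi)$ with the inequality $|\xi|^{-1/2} \geq 1$, and for $|\xi| \geq 1$ one appeals directly to Lemma \ref{Fourier decay}. There is no substantive obstacle to overcome here, since all the genuine work — translating nonvanishing curvature into half-order Fourier decay via stationary phase — has already been carried out in Lemma \ref{Fourier decay}.
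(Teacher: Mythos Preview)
Your proposal is correct and follows essentially the same argument as the paper: pass to the Fourier side via Plancherel, write $\widehat{\mathfrak{R}f} = \hat{f}\,\hat{\mu}$, and substitute the decay bound $|\hat{\mu}(\xi)|\lesssim_\psi |\xi|^{-1/2}$ from Lemma~\ref{Fourier decay}. Your extra remark about validity of the bound for small $|\xi|$ is a harmless clarification of something the paper leaves implicit.
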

	\begin{proof}
		By Lemma \ref{Fourier decay}, we easily deduce
		\begin{equation*}
			\begin{split}
				\|\mathfrak{R}f\|_{L^2}^2&=\int |\widehat{f\ast \mu}(\xi)|^2 d\xi=\int |\hat{f}(\xi)|^2|\hat{\mu}(\xi)|^2 d\xi\\
				&\lesssim_\psi\int |\hat{f}(\xi)|^2|\xi|^{-1} d\xi=\|f\|_{\dot{H}^{-1/2}}^2,
			\end{split}
		\end{equation*}
		and
		\begin{equation*}
			\begin{split}
				\|\mathfrak{R}f\|_{\dot{H}^{1}}^2&=\int |\widehat{f\ast \mu}(\xi)|^2 |\xi|^2d\xi=\int |\hat{f}(\xi)|^2|\hat{\mu}(\xi)|^2 |\xi|^2 d\xi\\
				&\lesssim_\psi\int |\hat{f}(\xi)|^2|\xi|^{1} d\xi=\|f\|_{\dot{H}^{1/2}}^2.
			\end{split}
		\end{equation*}
	\end{proof}
	The following is a corollary of Lemma \ref{Sobolev1} by standard interpolation argument.
	\begin{cor}\label{Sobolev2}
		There exists a constant $C=C(\psi)>0$ such that
		\[\|\mathfrak{R}f\|_{\dot{H}^{s+1/2}}\leq C \|f\|_{\dot{H}^{s}}, \quad f\in C^\infty_c(\mathbb R^2),~s\in[-1/2, 1/2].\]
	\end{cor}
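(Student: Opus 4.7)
The plan is to directly mimic the Fourier-side proof of Lemma \ref{Sobolev1}, with the Sobolev exponent $s$ treated as a parameter. Since $f\in C_c^\infty(\mathbb R^2)$ and $\mu$ is compactly supported, $\mathfrak{R}f=f\ast\mu$ is a continuous function of compact support, so by Plancherel combined with the convolution--multiplication identity $\widehat{f\ast\mu}(\xi)=\hat f(\xi)\hat\mu(\xi)$ one has
\[
\|\mathfrak{R}f\|_{\dot H^{s+1/2}}^2 \;=\; \int_{\mathbb R^2} |\hat f(\xi)|^{2}\,|\hat\mu(\xi)|^{2}\,|\xi|^{2s+1}\,d\xi.
\]
Invoking the pointwise bound $|\hat\mu(\xi)|\le C(\psi)|\xi|^{-1/2}$ from Lemma \ref{Fourier decay}, the integrand is controlled by $C(\psi)^2|\hat f(\xi)|^2|\xi|^{2s}$, which gives the desired inequality $\|\mathfrak{R}f\|_{\dot H^{s+1/2}}\le C(\psi)\|f\|_{\dot H^s}$. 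This argument works in one line for every $s$ for which the right-hand integral is finite, and in particular for all $s\in[-1/2,1/2]$.

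Alternatively, one can package the same calculation as the advertised interpolation: Lemma \ref{Sobolev1} provides the two endpoint estimates $\mathfrak{R}\colon \dot H^{-1/2}\to L^2$ and $\mathfrak{R}\colon \dot H^{1/2}\to \dot H^{1}$. Writing $s=(1-\theta)(-1/2)+\theta(1/2)=\theta-1/2$ with $\theta\in[0,1]$, the intermediate spaces under complex interpolation are $[\dot H^{-1/2},\dot H^{1/2}]_\theta=\dot H^{s}$ and $[L^2,\dot H^{1}]_\theta=\dot H^{s+1/2}$. Complex interpolation of $\mathfrak{R}$ between the two endpoints then yields the boundedness $\mathfrak{R}\colon \dot H^{s}\to \dot H^{s+1/2}$ with a constant depending only on $\psi$.

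There is essentially no obstacle here: the only point worth checking is the compatibility of the homogeneous Sobolev spaces under interpolation (alternatively, that the direct Fourier calculation is legitimate, which follows from $f\in C_c^\infty$). Since both the direct Plancherel argument and the interpolation route produce the same bound, I would favour the one-line Plancherel proof in the actual write-up and only invoke ``standard interpolation'' as a remark.
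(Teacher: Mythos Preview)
Your proposal is correct. The direct Plancherel argument you give is literally the same computation the paper uses to prove the two endpoint cases in Lemma \ref{Sobolev1}, just carried out with the exponent $s$ left as a parameter; the paper then passes from the endpoints to general $s\in[-1/2,1/2]$ via Stein--Weiss weighted $L^2$ interpolation applied to $\mathfrak{F}\circ\mathfrak{R}$. Your route is more economical in this setting because the convolution structure $\mathfrak{R}f=f\ast\mu$ together with the pointwise decay $|\hat\mu(\xi)|\lesssim|\xi|^{-1/2}$ already gives the full range in one line, so the interpolation machinery is redundant. The paper's packaging would become the natural choice only if one had genuine endpoint bounds without such a clean Fourier multiplier description (for instance, for operators that are not pure convolutions), which is presumably why it is phrased that way.
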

	\begin{proof}
		Take $\omega_0(\xi)=|\xi|^{-1}, \omega_1(\xi)=|\xi|$ and $v_0(x)=1, v_1(x)=|x|^2$. Let $\mathfrak{F}$ be the Fourier transform operator. By Lemma \ref{Sobolev1}, the operator $(\mathfrak{F}\circ\mathfrak{R})$ extends to a bounded operator $L^2(\mathbb R^2, \omega_0 d\mathcal{L}^2)\to L^2(\mathbb R^2, v_0 d\mathcal{L}^2)$ and $L^2(\mathbb R^2, \omega_1 d\mathcal{L}^2)\to L^2(\mathbb R^2, v_1 d\mathcal{L}^2)$. The next step is to apply Stein-Weiss $L^p$-interpolation (see for example \cite[Theorem 5.4.1]{MR482275}), which gives
		\[\|(\mathfrak{F}\circ\mathfrak{R})(f)\|_{L^2(v_\theta)}\leq C(\psi)\|f\|_{L^2(\omega_\theta)},\]
		where $\omega_\theta=\omega_0^{1-\theta}\omega_1^{\theta}$ and $v_\theta=v_0^{1-\theta}v_1^{\theta}$. After rewriting the inequality and letting $s=\theta-1/2$, we finally get
		\[\|\mathfrak{R}f\|_{\dot{H}^{s+1/2}}\lesssim_\psi \|f\|_{\dot{H}^{s}}.\]
	\end{proof}

	\subsection{Estimating incidences}\label{sub-3.2}
	In this subsection, the Sobolev estimates developed above are applied to establish Theorem \ref{thm-incidence-measure}. The following lemma builds a connection between the $\delta$-incidences and the operator $\mathfrak{R}$ defined in \eqref{def:R}.
	\begin{lemma}\label{connecting inequality}
		Let $q=(x_q, y_q)\in B(1)$ and $\delta\in (0, 1)$, then there exists a constant $c=c(\psi)>0$ such that
		\[\mathbf{1}_{\Gamma_q(\delta)}(p)\leq \delta^{-1}\int_{\mathbb R} \mathbf{1}_{B(q, c\delta)}(x, y_p-\psi(x_p-x)) dx, \quad \forall p=(x_p,y_p)\in B(1).\]
	\end{lemma}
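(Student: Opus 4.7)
\medskip

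\textbf{Plan.} The inequality is geometric: the left side is $1$ exactly when $p$ lies in the $\delta$-neighborhood of $q+\Gamma$, while the right side is $\delta^{-1}$ times the one-dimensional Lebesgue measure of the set
\[S := \{x\in\R : (x,\,y_p-\psi(x_p-x))\in B(q,c\delta)\}.\]
So it suffices to prove the implication: whenever $p\in\Gamma_q(\delta)$, we have $\mathcal L^1(S)\geq \delta$. The plan is to unpack the definition of $\Gamma_q(\delta)$ to get a witness parameter $u^*\in[-1,1]$, change variables $u=x_p-x$, and show a full $\delta$-interval of $u$'s around $u^*$ lands inside $B(q,c\delta)$ after the map $u\mapsto(x_p-u,\,y_p-\psi(u))$.

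\medskip

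\textbf{Step 1: Extract a witness.} If $p=(x_p,y_p)\in\Gamma_q(\delta)$, then by Definition~\ref{Curved tubes} there exists $u^*\in[-1,1]$ with
\[|x_p-x_q-u^*|\leq \delta \quad\text{and}\quad |y_p-y_q-\psi(u^*)|\leq \delta.\]
Since $p,q\in B(1)$, one has $x_p,x_q\in[-1,1]$, and under the change of variables $u=x_p-x$ the integration range $|x-x_q|<c\delta$ translates to $|u-(x_p-x_q)|<c\delta$, which is contained in a fixed compact interval $[-3,3]$ for $\delta\in(0,1)$. Let $M_\psi:=\sup_{[-3,3]}|\psi'|<\infty$, which is finite since $\psi\in C^2(\R)$.

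\medskip

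\textbf{Step 2: Produce an interval of length $\delta$.} Consider the interval $I=[u^*-\delta/2,\,u^*+\delta/2]$. For $u\in I$, writing $u_0:=x_p-x_q$:
\[|u-u_0|\leq |u-u^*|+|u^*-u_0|\leq \tfrac{\delta}{2}+\delta=\tfrac{3\delta}{2},\]
\[|\psi(u)-(y_p-y_q)|\leq |\psi(u)-\psi(u^*)|+|\psi(u^*)-(y_p-y_q)|\leq M_\psi\cdot\tfrac{\delta}{2}+\delta,\]
the second inequality using the mean value theorem on $[-3,3]$. Thus for $x:=x_p-u$ with $u\in I$, the pair $(x,\,y_p-\psi(x_p-x))=(x_p-u,\,y_p-\psi(u))$ satisfies
\[(x-x_q)^2+(y_p-\psi(x_p-x)-y_q)^2 \leq \Bigl[\tfrac{9}{4}+\bigl(\tfrac{M_\psi}{2}+1\bigr)^2\Bigr]\delta^2.\]
Choosing $c=c(\psi)$ to be the square root of that bracketed quantity, this point lies in $B(q,c\delta)$. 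Since the map $u\mapsto x=x_p-u$ is measure-preserving and $|I|=\delta$, we conclude $\mathcal L^1(S)\geq \delta$, proving the lemma.

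\medskip

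\textbf{Obstacle.} There is essentially no hard step: the argument is a direct geometric/change-of-variables computation. The only minor point is ensuring $\psi'$ is controlled on the relevant range; this is immediate because the hypothesis $p,q\in B(1)$ confines the variable $x_p-x$ to a bounded interval when $|x-x_q|<c\delta$ and $\delta<1$, so $C^2$-regularity of $\psi$ suffices.
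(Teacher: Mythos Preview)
Your proof is correct and follows essentially the same approach as the paper: extract a witness parameter from the definition of $\Gamma_q(\delta)$, then use the Lipschitz property of $\psi$ on a bounded interval together with the triangle inequality to show that a full interval of length $\geq\delta$ maps into $B(q,c\delta)$. The only cosmetic difference is that the paper centers its interval at $x=x_q$ (length $2\delta$) whereas you center yours at $u=u^*$ (length $\delta$); both choices work for the same reason.
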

	\begin{proof}
		Fix $p\in B(1)\cap \Gamma_q(\delta)$. It suffices to show $(x, y_p-\psi(x_p-x))\in B(q, c\delta)$ for some $c(\psi)>0$ whenever $|x-x_q|\leq \delta$. Since $p\in \Gamma_q(\delta)$, $dist(p, \Gamma_q)\leq \delta$. Then we can find some $(x_0, y_0)\in \Gamma_q$ such that $|p-(x_0, y_0)|\leq \delta$. Note $y_0=y_q+\psi(x_0-x_q)$, which implies
		\[|(x_p-x_0, y_p-y_q-\psi(x_0-x_q))|\leq \delta.\]
		By applying the triangle inequality and noting that $\psi$ is Lipschitz on bounded intervals, we infer
		\begin{equation*}
			\begin{split}
				|y_p-y_q-\psi(x_p-x)|&\leq |y_p-y_q-\psi(x_0-x_q)|+|\psi(x_0-x_q)-\psi(x_p-x)|\\
				&\leq \delta +C(\psi)|(x-x_q)+(x_0-x_p)|\leq (2C(\psi)+1)\delta.
			\end{split}
		\end{equation*}
		By choosing $c>0$ properly and recalling $|x-x_q|\leq \delta$, we get $(x, y_p-\psi(x_p-x))\in B(q, c\delta)$ and complete the proof.
	\end{proof}
	
	We are now ready to establish the $\delta$-incidence theorem.
	
	\begin{proof}[Proof of Theorem \ref{thm-incidence-measure}]
		We assume $\mu \in C_c^\infty(\mathbb R^2)$. For general case, consider the smooth approximation of $\mu$. Let $\eta\in C_c^\infty(\mathbb R^2)$ satisfy $\mathbf{1}_{B(0, 1/2)}\leq \eta \leq \mathbf{1}_{B(0, 1)}$ and $\int \eta \sim 1$. Write $\eta_\delta(q)=\delta^{-2}\eta(q/\delta)$. For $\delta>0$, define $\nu_\delta:=\nu\ast \eta_{2c\delta}\in C_c^\infty(\mathbb R^2)$ where $c>0$ is the constant in Lemma \ref{connecting inequality}. Note $\nu (B(q, c\delta))\lesssim \delta^2\nu_{\delta}(q)$. Fix $p\in \text{spt}(\mu)\subset B(1)$, then by Lemma \ref{connecting inequality} and Fubini Theorem,
		\begin{equation*}
			\begin{split}
				\int \mathbf{1}_{\Gamma_q(\delta)}(p) d\nu(q)&\leq \int \delta^{-1}\int \mathbf{1}_{B(q, c\delta)}(x, y_p-\psi(x_p-x)) dx d\nu(q)\\
				&=\delta^{-1} \int \nu(B((x, y_p-\psi(x_p-x)), c\delta)) dx\lesssim \delta \int \nu_\delta (x, y_p-\psi(x_p-x)) dx.
			\end{split}
		\end{equation*}
		By definition of $\mathcal{I}_\delta(\mu\times \nu)$, we infer from Fubini Theorem and coarea formula
		\begin{equation*}
			\begin{split}
				\delta^{-1}\mathcal{I}_\delta(\mu\times \nu)&=\delta^{-1} \int \int \mathbf{1}_{\Gamma_q(\delta)}(p) d\nu(q) d\mu(p)\lesssim \int_{\mathbb R^2} \int_{\mathbb R} \nu_\delta (x, y_p-\psi(x_p-x)) dx d\mu(p)\\
				&=\int_{\mathbb R} \int_{\mathbb R^2} \nu_\delta (x, y_p-\psi(x_p-x)) d\mu(p) dx\\
				&=\int_{\mathbb R}\int_{\mathbb R}\nu_{\delta}(x,y)\int_{\{y_p-y=\psi(x_p-x)\}}\frac{\mu(p)}{\sqrt{1+\psi'(x_p-x)^2}}d\mathcal{H}^1(p) dy dx\\
				&\leq \int_{\mathbb R^2}\nu_{\delta}(x,y) \int_{(x,y)+\Gamma}\mu(p) d\mathcal{H}^1(p) dx dy=\int_{\mathbb R^2}\nu_{\delta}(x,y) \widetilde{\mathfrak{R}}\mu(x,y) dx dy.
			\end{split}
		\end{equation*}
		Recall that $\widetilde{\mathfrak{R}}$ was defined in Remark \ref{rmk-1}. Next, by Plancherel identity and Cauchy-Schwarz inequality, we infer
		\begin{equation*}
			\begin{split}
				\delta^{-1}\mathcal{I}_\delta(\mu\times \nu)&\lesssim \int_{\mathbb R^2}\widehat{\nu_{\delta}}(\xi) \widehat{\overset{\sim}{\mathfrak{R}}\mu}(\xi) d\xi\\
				&\lesssim \left(\int |\widehat{\nu}(\xi)|^2 |\xi|^{t-2} d\xi\right)^{1/2}\left(\int |\widehat{\overset{\sim}{\mathfrak{R}}\mu}(\xi)|^2 |\xi|^{2-t} d\xi\right)^{1/2}\\
				&\sim_t \textup{I}_{t}(\nu)^{1/2}\|\widetilde{\mathfrak{R}}\mu\|_{\dot{H}^{(2-t)/2}}. 
			\end{split}
		\end{equation*}
		Since $(2-t)/2=s+1/2$ for $s=(1-t)/2$ and $t\in (1,2)$, it follows from Corollary \ref{Sobolev2} (with $\mathfrak{R}$ replaced by $\widetilde{\mathfrak{R}}$) that
		\[\|\widetilde{\mathfrak{R}}\mu\|_{\dot{H}^{(2-t)/2}}\lesssim_{\psi}\|\mu\|_{\dot{H}^s}\sim_{t} (\textup{I}_{3-t}(\mu))^{1/2}.\]
		Combining the above estimates finishes the proof of \eqref{form-incidence-measue}.
	\end{proof}
	
	Theorem \ref{thm-incidence-measure} yields the following weighted incidence estimate under the weighted Katz-Tao condition we defined in Definition \ref{def:weightedkatzTaoSet}. This weighted incidence estimate is the main tool we will need to prove Theorem \ref{thm-incidence1}. See Appendix \ref{apend A} for the details of how to deduce Theorem \ref{thm-incidence1} from Corollary \ref{incidence11}.
	\begin{cor}\label{incidence11}
		Let $s, t\in [0,2)$ with $s+t< 3$ and let $A, B\geq 1$. There exists $\delta_0=\delta_0(\psi)>0$ such that the following holds for any $\delta\in(0,\delta_0]$. Assume that $\mathcal{T}:=\{\Gamma_q(\delta): q\in \mathcal{P}\subset \mathcal{D}_\delta\}$ is a weighted Katz-Tao $(\delta, t, A)$-set with weight function $w_1$ and $\mathcal{F}\subset \mathcal{D}_\delta$ is a weighted Katz-Tao $(\delta, s, B)$-set with weight function $w_2$. Then
		\begin{equation}\label{incidence22}
			\mathcal{I}_w(\mathcal{F}, \mathcal{T})\lesssim_{\psi,s,t} \sqrt{\delta^{-1}AB \left(\sum_{q\in\mathcal{P}}w_1(q)\right) \left(\sum_{p\in\mathcal{F}}w_2(p)\right)}.
		\end{equation}
	\end{cor}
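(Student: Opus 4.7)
The plan is to deduce Corollary~\ref{incidence11} from Theorem~\ref{thm-incidence-measure} by constructing natural auxiliary measures from the weighted Katz-Tao collections. Define
\[
\mu := \delta^{-2} \sum_{p \in \mathcal{F}} w_2(p)\, \mathbf{1}_p\, d\mathcal{L}^2,
\qquad
\nu := \delta^{-2} \sum_{q \in \mathcal{P}} w_1(q)\, \mathbf{1}_q\, d\mathcal{L}^2,
\]
so that $M_\mu := \mu(\R^2) = \sum_p w_2(p)$ and $M_\nu := \nu(\R^2) = \sum_q w_1(q)$. A short geometric argument yields $\mathcal{I}_w(\mathcal{F}, \mathcal{T}) \leq \mathcal{I}_{C_0\delta}(\mu, \nu)$ for some $C_0 = C_0(\psi)$: if $p \cap \Gamma_q(\delta) \neq \emptyset$, pick $x_0 \in p$ with $\dist(x_0, \Gamma_q) \leq \delta$; for any $x \in p$ and $y \in q$, the triangle inequality combined with $\mathrm{diam}(p), \mathrm{diam}(q) \leq \sqrt{2}\delta$ and the identity $\Gamma_y = \Gamma_q + (y - q_c)$ gives $\dist(x, \Gamma_y) \leq C_0 \delta$. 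Integrating $\mathbf{1}_{\{x \in \Gamma_y(C_0\delta)\}}$ over $p \times q$ produces at least $\delta^4$ per incident pair, and summing against the weights yields the stated comparison.

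Next, choose $t' \in (\max\{1,t\}, \min\{2, 3-s\})$. This interval is non-empty: $s + t < 3$ combined with $s, t \in [0, 2)$ forces $\max\{1, t\} < \min\{2, 3-s\}$. Applying Theorem~\ref{thm-incidence-measure} to $\mu$ and $\nu$ with exponent $t'$ gives
\[
\mathcal{I}_{C_0\delta}(\mu, \nu) \lesssim_{\psi, s, t} \delta \sqrt{\textup{I}_{3-t'}(\mu)\, \textup{I}_{t'}(\nu)}.
\]

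To bound the Riesz energies, observe that the weighted Katz-Tao condition together with $w_2(p) \leq B$ yields $\mu(B(x, r)) \lesssim B(r/\delta)^s$ for $r \geq \delta$ and $\mu(B(x, r)) \lesssim B(r/\delta)^2$ for $r < \delta$, with the analogous bound for $\nu$ in terms of $A, t$. A standard layer-cake computation of $\int |x-y|^{-(3-t')}\, d\mu(y)$, split over the ranges $(0, \delta]$, $[\delta, 1]$, $[1, \infty)$ and using the strict inequalities $s < 3 - t' < 2$, shows $\textup{I}_{3-t'}(\mu) \lesssim_{\psi, s, t} B \delta^{-(3-t')} M_\mu$. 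Identically, the strict inequalities $t < t' < 2$ give $\textup{I}_{t'}(\nu) \lesssim_{\psi, s, t} A \delta^{-t'} M_\nu$. Plugging in,
\[
\mathcal{I}_w(\mathcal{F}, \mathcal{T}) \lesssim_{\psi, s, t} \delta \sqrt{AB\, \delta^{-3} M_\mu M_\nu} = \sqrt{\delta^{-1} AB\, M_\mu M_\nu},
\]
as required.

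The only real subtlety is the choice of $t'$: the constants in the layer-cake estimates contain denominators $t' - t$, $3 - s - t'$, $t' - 1$, and $2 - t'$ which degenerate at the boundaries of the admissible interval, so $t'$ must be chosen strictly inside. The strict hypothesis $s + t < 3$ together with $s, t < 2$ ensures that $(\max\{1,t\}, \min\{2, 3-s\})$ has positive length depending only on $s$ and $t$, and taking, say, its midpoint produces implicit constants of the required form $\lesssim_{\psi, s, t}$. The smallness condition $\delta \leq \delta_0(\psi)$ serves only to ensure $C_0\delta \leq 1$, so that the auxiliary measures and the neighborhoods $\Gamma_y(C_0\delta)$ remain in a fixed bounded region where Theorem~\ref{thm-incidence-measure} applies.
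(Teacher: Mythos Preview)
Your proof is correct and follows essentially the same approach as the paper: define auxiliary measures from the weighted collections, compare $\mathcal{I}_w$ with a continuous $\delta$-incidence, apply Theorem~\ref{thm-incidence-measure} with an exponent strictly between the boundary values, and bound the Riesz energies using the weighted Katz-Tao conditions. The only cosmetic differences are that the paper normalizes its measures with extra factors $\delta^s,\delta^t$ and computes the energies via an explicit dyadic-shell sum rather than your Frostman/layer-cake sketch; these are equivalent standard arguments.
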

	\begin{proof}
		Choose $u\in(s,2)$ and $v\in(t,2)$ such that $u+v=3$. Define two measures by
		\[\nu:=\delta^{t-2}\sum_{q\in\mathcal{P}}w_1(q)\boldsymbol{1}_{q},\quad \mu:=\delta^{s-2}\sum_{p\in\mathcal{F}}w_2(p)\boldsymbol{1}_{p}.\]
		Then $spt(\nu)\subset \cup \mathcal{P}$ and $spt(\mu)\subset \cup \mathcal{F}$. Also, we have the useful estimates
		\begin{equation}\label{form-claim1}
			\nu(q)\sim \delta^t w_1(q) ~\text{if} ~q\in \mathcal{P}, \quad \mu(p)\sim \delta^s w_2(p) ~\text{if}~ p\in \mathcal{F}.
		\end{equation}
		By definition of weighted incidences and \eqref{form-claim1}, we can deduce that
		\begin{equation}\label{form-upper}
			\begin{split}
				\mathcal{I}_w(\mathcal{F},\mathcal{T})&\sim \delta^{-s-t}\sum_{q\in\mathcal{P}}\sum_{p\in\mathcal{F}} \nu(q)\mu(p)\boldsymbol{1}_{\{p\cap\Gamma_q(\delta)\neq\emptyset\}}\\
				&=\delta^{-s-t} \mu\times\nu(\{(p,q)\in\mathcal{F}\times\mathcal{P}:p\cap\Gamma_q(\delta)\neq\emptyset\})\\
				&\leq \delta^{-s-t} \mu\times\nu(\{(x,y)\in \mathbb R^4: x\in\Gamma_y(C\delta)\})=\delta^{-s-t} \mathcal{I}_{C\delta}(\mu\times\nu).
			\end{split}
		\end{equation}
		Here we used the fact that there exists $C=C(\psi)>0$ such that
		\[\{(p,q)\in\mathcal{F}\times\mathcal{P}:p\cap\Gamma_q(\delta)\neq\emptyset\}\subset \{(x,y)\in \mathbb R^2\times \mathbb R^2: x\in \Gamma_y(C\delta)\}.\]
		Note that we choose $\delta_0>0$ small such that $C\delta\in(0,1)$ for any $\delta\in (0,\delta_0]$.
		
		To apply Theorem \ref{thm-incidence-measure}, we only need to calculate the Riesz energies of $\nu$ and $\mu$. By using the weighted Katz-Tao condition of $\mathcal{P}$ we compute
		\[\begin{split}
			\textup{I}_v(\nu)&=\sum_{q\in \mathcal{P}} \sum_{\substack{p\in \mathcal{P} \\ dist (p,q)=0 }}\int_{p\times q} \frac{d\nu(x) d\nu(y)}{|x-y|^v}+\sum_{\substack{p,q \in \mathcal{P}\\ dist (p,q)\in [\delta, 1]}} \int_{p\times q} \frac{d\nu(x) d\nu(y)}{|x-y|^v}\\
			&\lesssim \sum_{q\in \mathcal{P}} (\delta^{t-2})^2 w_1(q)A\cdot 9 \int_{3p\times 3p} \frac{dx dy}{|x-y|^v}+\sum_{q\in \mathcal{P}}\sum_{j=1}^{\log\frac{1}{\delta}}\sum_{\substack{dist (p,q) \\ \in [2^{-j}, 2^{-j+1})}} \int_{p\times q} \frac{d\nu(x) d\nu(y)}{|x-y|^v}\\
			&\lesssim A \sum_{q\in \mathcal{P}} w_1(q)(\delta^{t-2})^2\cdot (\delta^{4-v})+\sum_{q\in \mathcal{P}}\sum_{j=1}^{\log\frac{1}{\delta}}\sum_{\substack{dist (p,q) \\ \in [2^{-j}, 2^{-j+1})}} 2^{jv}\nu(p)\nu(q)\\
			&\stackrel{(\ref{form-claim1})}{\lesssim} A\delta^{2t-v}\sum_{q\in \mathcal{P}} w_1(q)+\sum_{q\in \mathcal{P}}w_1(q) \sum_{j=1}^{\log\frac{1}{\delta}}2^{jv}\left(\sum_{p\in\mathcal{P}\cap Q_j}w_1(p)\right)\delta^{2t}\lesssim_t A\delta^{2t-v}\sum_{q\in \mathcal{P}} w_1(q),
		\end{split}\]
		where "$3p$" denotes the cube of side length $3\delta$ with the same center as $p$, in the last inequality $Q_j$ is a dyadic cube with side length $\sim 2^{-j}$ and recall $t-v<0$. By the same calculation, we can obtain $\textup{I}_{u}(\mu)\lesssim_s B\delta^{2s-u}\sum_{p\in \mathcal{F}} w_2(p)$. 
		
		Consequently, we infer by Theorem \ref{thm-incidence-measure} that (note $v>1$ and $u+v=3$)
		\begin{equation*}
			\begin{split}
				\mathcal{I}_w(\mathcal{F}, \mathcal{T})&\overset{\eqref{form-upper}}{\lesssim} \delta^{-s-t}\mathcal{I}_{C\delta}(\mu,\nu)\overset{\eqref{form-incidence-measue}}{\lesssim}_{\psi,v} \delta^{-s-t+1}\sqrt{\textup{I}_v(\nu)\textup{I}_u(\mu)}\\
				&\lesssim_{\psi,s,t} \delta^{-s-t+1}\sqrt{A\delta^{2t-v}\sum_{q\in \mathcal{P}} w_1(q)B\delta^{2s-u}\sum_{p\in \mathcal{F}} w_2(p)}\\
				&=\sqrt{\delta^{-1}AB\sum_{q\in \mathcal{P}} w_1(q)\sum_{p\in \mathcal{F}} w_2(p)},
			\end{split}
		\end{equation*}
		which completes the proof of \eqref{incidence22}.
	\end{proof}
	\begin{remark}\label{rmk-endcase}
		When $s+t=3$ or $s=2$ or $t=2$, we use the fact that $\mathcal{P}$ is a weighted Katz-Tao $(\delta, t-\epsilon, A\delta^{-\epsilon})$-set and $\mathcal{T}$ is a weighted Katz-Tao $(\delta, s-\epsilon, B\delta^{-\epsilon})$-set for any $\epsilon\in (0,1)$. Thus an incidence estimate with $\delta^{-\epsilon}$-error can be got by using \eqref{incidence22}.
	\end{remark}

	\section{Proof of Theorem \ref{main} for $\mathbf{t\in(0,s]}$}\label{sec4}
	In this section, we prove the following proposition which implies Theorem \ref{main} when $t\in(0,s]$ due to the reduction in Subsection \ref{sub-2.2}. A subtle difference here is that the implicit constant in estimate \eqref{form-incidence2} is actually irrelevant to parameter "$t$". This is because the proof of Proposition \ref{pro-incidence2} uses the fact that $\mathcal{P}$ is also a Katz-Tao $(\delta,s)$-set if $t\leq s$.
	\begin{proposition}\label{pro-incidence2}
		Let $0\leq t\leq s\leq 1$ and $A, B\geq1$. Let $\mathcal{P}\subset\mathcal{D}_\delta$ be a Katz-Tao $(\delta, t, A)$-set. For each $q\in\mathcal{P}$, assume that there exists a Katz-Tao $(\delta, s, B)$-set
		\[\mathcal{F}(q)\subset\{p\in\mathcal{D}_\delta: p\cap \Gamma_q(\delta)\neq\emptyset\}.\]
		Write $\mathcal{F}:=\cup_{q\in\mathcal{P}} \mathcal{F}(q)$. Then there holds
		\begin{equation}\label{form-incidence2}
			\sum_{q\in\mathcal{P}}|\mathcal{F}(q)|\lesssim_{\psi, s} \log(\tfrac{1}{\delta})\sqrt{A B \delta^{-s}|\mathcal{P}||\mathcal{F}|}.
		\end{equation}
	\end{proposition}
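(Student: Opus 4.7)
The plan is to apply Cauchy--Schwarz after rewriting $\sum_q |\mathcal{F}(q)|$ dually as a sum over points. Setting $\mu(p) := \#\{q\in\mathcal{P}:p\in\mathcal{F}(q)\}$, we have $\sum_{q\in\mathcal{P}}|\mathcal{F}(q)|=\sum_{p\in\mathcal{F}}\mu(p)$, so Cauchy--Schwarz gives
\[
\sum_{q\in\mathcal{P}}|\mathcal{F}(q)|\le|\mathcal{F}|^{1/2}\Bigl(\sum_{p\in\mathcal{F}}\mu(p)^2\Bigr)^{1/2},\qquad \sum_{p}\mu(p)^2=\sum_{q_1,q_2\in\mathcal{P}}|\mathcal{F}(q_1)\cap\mathcal{F}(q_2)|.
\]
Thus \eqref{form-incidence2} is reduced to the pairwise-intersection estimate
\[
\sum_{q_1,q_2\in\mathcal{P}}|\mathcal{F}(q_1)\cap\mathcal{F}(q_2)|\lesssim_{\psi,s}\log(1/\delta)\cdot AB\delta^{-s}|\mathcal{P}|.
\]

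For this I would extract two elementary geometric facts from the curvature hypothesis $\psi''>0$. Writing $\alpha:=x_{q_1}-x_{q_2}$ and $\beta:=y_{q_1}-y_{q_2}$, the condition $\Gamma_{q_1}(\delta)\cap\Gamma_{q_2}(\delta)\ne\emptyset$ amounts, after parametrising the curves by $x$, to the existence of $u\in[-1,1]$ with $|\psi(u+\alpha)-\psi(u)-\beta|\lesssim_\psi\delta$. From the boundedness of $|\psi'|$ on $[-1,1]$ one infers (a) that $|\beta|\lesssim_\psi|\alpha|+\delta$, so any admissible $q_2$ lies in a ball of radius $\lesssim_\psi|\alpha|+\delta$ centered at $q_1$. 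From the uniform lower bound $\psi''\gtrsim_\psi 1$ one infers (b) that when $|\alpha|\ge\delta$ the two curves cross transversally with angle $\sim|\alpha|$, so $\Gamma_{q_1}(\delta)\cap\Gamma_{q_2}(\delta)$ is contained in a ball of radius $\lesssim_\psi\delta/|\alpha|$. Both facts are one-variable computations using the mean value theorem on $\psi'$.

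The proof then concludes by a dyadic decomposition. Since $t\le s$ and $r/\delta\ge 1$, the hypothesis on $\mathcal{P}$ automatically upgrades to the Katz--Tao $(\delta,s,A)$ property, which is the only place the assumption $t\le s$ is used (and why the final constant is independent of $t$). Fix $q_1\in\mathcal{P}$. For each $0\le j\le\lceil\log_2(1/\delta)\rceil$, fact (a) combined with the $(\delta,s,A)$-property of $\mathcal{P}$ bounds the number of $q_2\in\mathcal{P}$ with $|\alpha|\in[2^j\delta,2^{j+1}\delta)$ by $\lesssim A\cdot 2^{js}$, while fact (b) combined with the $(\delta,s,B)$-property of $\mathcal{F}$ bounds $|\mathcal{F}(q_1)\cap\mathcal{F}(q_2)|$ by $\lesssim B(2^{-j}/\delta)^s=B\cdot 2^{-js}\delta^{-s}$. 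The product $AB\delta^{-s}$ is independent of $j$, and summing over the $O(\log(1/\delta))$ dyadic levels, together with a contribution of the same order from the diagonal $q_1=q_2$ and from the case $|\alpha|<\delta$ (where both the tube intersection and the ball containing admissible $q_2$ have radius $\sim \delta$), yields the required bound. The only mildly delicate point is (b): the linearising map $\Psi(x,y)=(x,x^2-y)$ available when $\psi(x)=x^2$ is no longer at our disposal, and one must verify directly, uniformly in $\psi$ with non-zero curvature, that nearby translated tubes cross transversally at the scale predicted by the angle $|\alpha|$.
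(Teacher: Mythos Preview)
Your proposal is correct and follows essentially the same route as the paper: Cauchy--Schwarz reduces to the second moment $\sum_{q_1,q_2}|\mathcal{F}(q_1)\cap\mathcal{F}(q_2)|$, the two geometric facts (a) $|\beta|\lesssim_\psi|\alpha|+\delta$ and (b) $\mathrm{diam}(\Gamma_{q_1}(\delta)\cap\Gamma_{q_2}(\delta))\lesssim_\psi\delta/|\alpha|$ are exactly the paper's \eqref{form-3.1}, \eqref{comparable} and \eqref{main claim}, and the dyadic sum over $|\alpha|$ together with the upgrade of $\mathcal{P}$ to a Katz--Tao $(\delta,s,A)$-set (using $t\le s$) is identical. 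Your unified statement of (a) is in fact slightly cleaner than the paper's split into the cases $x_1=x_2$ and $x_1\neq x_2$, and your phrasing works directly with the $\delta$-tubes rather than assuming $\Gamma_{q_1}\cap\Gamma_{q_2}\neq\emptyset$ as the paper does at \eqref{comparable}.
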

	\begin{proof}
		First, applying Cauchy-Schwarz inequality gives
		\begin{equation*}
			\begin{split}
				\sum_{q\in\mathcal{P}}|\mathcal{F}(q)|&=\sum_{p\in\mathcal{F}}|\{q\in\mathcal{P}: p\in \mathcal{F}(q)\}|\\
				&\leq |\mathcal{F}|^{1/2}\big(\sum_{p\in\mathcal{F}}|\{(q_1, q_2)\in\mathcal{P}\times \mathcal{P}: p\in \mathcal{F}(q_1)\cap \mathcal{F}(q_2)\}|\big)^{1/2}\\
				&=|\mathcal{F}|^{1/2}\left(\sum_{q}|\mathcal{F}(q)|+\sum_{q_1\neq q_2}|\mathcal{F}(q_1)\cap \mathcal{F}(q_2)|\right)^{1/2}.
			\end{split}
		\end{equation*}
		If the diagonal sum ($q_1=q_2$) dominates, we get by the basic fact $|\mathcal{F}|\leq B\delta^{-s} |\mathcal{P}|$ that
		\[\sum_{q\in\mathcal{P}}|\mathcal{F}(q)|\lesssim |\mathcal{F}|=\sqrt{|\mathcal{F}||\mathcal{F}|}\leq \sqrt{B \delta^{-s}|\mathcal{P}||\mathcal{F}|}.\] 
		
		If the "off-diagonal" sum dominates, we have
		\begin{equation}\label{two sums}
			\begin{split}
				\sum_{q\in\mathcal{P}}|\mathcal{F}(q)| &\lesssim |\mathcal{F}|^{1/2}\left(\sum_{q_1\neq q_2}|\mathcal{F}(q_1)\cap \mathcal{F}(q_2)|\right)^{1/2}\\
				&=|\mathcal{F}|^{1/2}\left(\sum_{\substack{q_1\neq q_2\\ x_1=x_2}}|\mathcal{F}(q_1)\cap \mathcal{F}(q_2)|+ \sum_{\substack{q_1\neq q_2\\ x_1\neq x_2}}|\mathcal{F}(q_1)\cap \mathcal{F}(q_2)|\right)^{1/2}.
			\end{split}
		\end{equation}
		From now on, we use $(x_i,y_i)$ to denote the center points of $q_i\in\mathcal{P}$. To estimate the first sum, we claim that $|y_1-y_2|\lesssim_\psi \delta$ if $\Gamma_{q_1}(\delta)\cap \Gamma_{q_2}(\delta)\neq \emptyset$ with $q_1\neq q_2$ and $x_1=x_2$. Write $\psi_i(x):=\psi(x-x_i)+y_i$ with $i=1, 2$. Indeed, for any $(x,y)\in \Gamma_{q_1}(\delta)\cap \Gamma_{q_2}(\delta)$, there exist $(z_i,w_i)\in \Gamma_{q_i}$ such that 
		\[|(x,y)-(z_i,w_i)|\leq \delta, \quad i=1,~2.\]
		Then by triangle inequality we infer that
		\begin{equation}\label{form-3.1}
			\begin{split}
				|y_1-y_2|&=|\psi_1(x)-\psi_2(x)|\leq |\psi_1(x)-\psi_1(z_1)|+|\psi_1(z_1)-\psi_2(z_2)|+|\psi_2(x)-\psi_2(z_2)|\\
				&\leq C(\psi)|x-z_1|+|w_1-w_2|+C(\psi)|x-z_2|\lesssim_\psi \delta.
			\end{split}
		\end{equation}
		This means for a fixed $q_1$ there are at most $C(\psi)$ cubes $q_2$ such that $x_1=x_2$ and $\mathcal{F}(q_1)\cap \mathcal{F}(q_2)\neq\emptyset$. Hence we can deduce
		\begin{equation}\label{vertical}
			\sum_{\substack{q_1\neq q_2\\ x_1=x_2}}|\mathcal{F}(q_1)\cap \mathcal{F}(q_2)|=\sum_{q_1}\sum_{\substack{q_2\neq q_1\\ x_2=x_1}}|\mathcal{F}(q_1)\cap \mathcal{F}(q_2)|\lesssim_\psi B\delta^{-s}|\mathcal{P}|,
		\end{equation}
		using the fact $|\mathcal{F}(q)|\lesssim B \delta^{-s}$ by the Katz-Tao $(\delta, s,B)$-condition of $\mathcal{F}(q)$.
		
		It remains to deal with the second sum on the right side of \eqref{two sums}. To get a good estimate for $|\mathcal{F}(q_1)\cap \mathcal{F}(q_2)|$, we need to study the geometry of $\Gamma_{q_1}(\delta)\cap \Gamma_{q_2}(\delta)$ when $x_1\neq x_2$. Note that the translated graph is given by
		\[\Gamma_{q_i}=\{(x,y): y-y_i=\psi(x-x_i),~x\in [-1,1]\}.\]
		Let $(x,y)\in \Gamma_{q_1}\cap \Gamma_{q_2}$, then $(x,y)$ solves the equations 
		\begin{equation*}
			\left\{\begin{aligned}
				& y-y_1=\psi(x-x_1),\\
				& y-y_2=\psi(x-x_2).
			\end{aligned}\right.
		\end{equation*}
		By using the mean value theorem, we can obtain
		\begin{equation}\label{comparable}
			|y_1-y_2|=|\psi(x-x_1)-\psi(x-x_2)|\lesssim_\psi |x_1-x_2|.
		\end{equation}
		Our main claim is that
		\begin{equation}\label{main claim}
			\textup{diam}\left(\Gamma_{q_1}(\delta)\cap \Gamma_{q_2}(\delta)\right)\lesssim_\psi \frac{\delta}{|x_1-x_2|}.
		\end{equation}
		This will conclude the whole proof. Indeed, \eqref{main claim} implies that $\mathcal{F}(q_1)\cap \mathcal{F}(q_2)$ is contained in a ball with radius $\sim_\psi \tfrac{\delta}{|x_1-x_2|}$. Since $t\leq s$, $\mathcal{P}$ is also a Katz-Tao $(\delta, s, A)$-set. By using \eqref{comparable}, \eqref{main claim} and the Katz-Tao $(\delta, s)$-conditions of both $\mathcal{F}(q)$ and $\mathcal{P}$, we compute
		\begin{equation*}
			\begin{split}
				\sum_{\substack{q_1\neq q_2\\ x_1\neq x_2}}|\mathcal{F}(p_1)\cap \mathcal{F}(p_2)| &\lesssim_\psi \sum_{\substack{q_1\neq q_2\\ x_1\neq x_2}}\frac{B}{|x_1-x_2|^s}\sim B\sum_{q_1 \in \mathcal{P}}\sum_{k=1}^{\log (\tfrac{1}{\delta})}\sum_{\substack{q_2\in \mathcal{P}\\|x_1-x_2|\in [2^{-k}, 2^{1-k})}}\frac{1}{|x_1-x_2|^s}\\
				&\lesssim B\sum_{q_1 \in \mathcal{P}}\sum_{k=1}^{\log (\tfrac{1}{\delta})}2^{ks}\left|\Big\{q_2: |y_1-y_2|\lesssim_\psi|x_1-x_2|\in [2^{-k}, 2^{1-k})\Big\}\right|\\
				&\lesssim B\sum_{q_1 \in \mathcal{P}}\sum_{k=1}^{\log (\tfrac{1}{\delta})}2^{ks}|\mathcal{P}\cap B_{c(\psi)2^{-k}}|\lesssim_{\psi, s} \log(\tfrac{1}{\delta})AB\delta^{-s}|\mathcal{P}|,
			\end{split}
		\end{equation*}
		as desired.
		
		It remains to show \eqref{main claim}. Let $\pi_1$ be the projection onto $x$-axis and denote $\psi_i(x):=\psi(x-x_i)+y_i$ with $i=1, 2$. As an intermediate goal, we show that
		\begin{equation}\label{mainclaim2}
			\pi_1\left(\Gamma_{q_1}(\delta)\cap \Gamma_{q_2}(\delta)\right)\subset \{x: |\psi_1(x)-\psi_2(x)|\lesssim_{\psi} \delta\}.
		\end{equation}
		For any $(x,y)\in \Gamma_{q_1}(\delta)\cap \Gamma_{q_2}(\delta)$, there exist $(z_i,w_i)\in \Gamma_i$ such that 
		\[|(x,y)-(z_i,w_i)|\leq \delta, \quad i=1,~2.\]
		Then by using triangle inequality as \eqref{form-3.1} we can get $|\psi_1(x)-\psi_2(x)|\lesssim_\psi \delta$, which proves \eqref{mainclaim2}. If we write $G(x)=\psi_1(x)-\psi_2(x)$, then by assumption \eqref{curvature assumption}
		\begin{equation}\label{lowergrowth}
			|G'(x)|=|\psi_1'(x)-\psi_2'(x)|=|\psi'(x-x_1)-\psi'(x-x_2)|\gtrsim_\psi |x_1-x_2|.
		\end{equation}
		As a consequence, \eqref{main claim} follows by combining \eqref{mainclaim2} and \eqref{lowergrowth}. This also completes the proof of Proposition \ref{pro-incidence2}.
	\end{proof}

	\appendix
	\section{An incidence estimate}\label{apend A}
	In this appendix, we sketch the proof of Theorem \ref{thm-incidence1} which we restate as follows. This is a modification of the proof of \cite[Theorem 5.2]{MR4751206}, but for completeness we include here. Again, by using Remark \ref{rmk-endcase}, the estimate \eqref{form-sharp incidence} with a $\delta^{-\epsilon}$-error also holds for any $s\in[0,1]$ and $t\in [0,2]$ with $s+t\leq2$ .
	\begin{thm}\label{thm-sharp incidence}
		Let $s\in[0,1)$ and $t\in [0,2)$ such that $s+t<2$ and let $A, B\geq 1$. Assume $\mathcal{P}\subset \mathcal{D}_\delta$ is a Katz-Tao $(\delta, t, A)$-set, and for each $q\in \mathcal{P}$ there exists a Katz-Tao $(\delta, s, B)$-set $\mathcal{F}(q)\subset \{p\in \mathcal{D}_\delta: p\cap \Gamma_q(\delta)\neq \emptyset\}$. Write $\mathcal{F}:=\bigcup_{q\in \mathcal{P}} \mathcal{F}(q)$. Then
		\begin{equation}\label{form-sharp incidence}
			\sum_{q\in \mathcal{P}}|\mathcal{F}(q)|\lesssim_{\psi, s,t} \sqrt{\delta^{-1}AB |\mathcal{F}||\mathcal{P}|}.
		\end{equation}
	\end{thm}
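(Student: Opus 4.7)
The plan is to deduce the theorem from Corollary \ref{incidence11} via a dyadic pigeonholing argument, following the strategy of \cite[Theorem 5.2]{MR4751206}. Write $I:=\sum_{q\in\mathcal{P}}|\mathcal{F}(q)|$, and for $p\in\mathcal{F}$ define the multiplicity $m(p):=|\{q\in\mathcal{P}:p\in\mathcal{F}(q)\}|$, so that $I=\sum_{p\in\mathcal{F}}m(p)$. By dyadic pigeonholing I first select a scale $m\in[1,|\mathcal{P}|]$ and a subset $\mathcal{F}_m\subset\mathcal{F}$ on which $m(p)\in[m,2m)$ and $m|\mathcal{F}_m|\gtrsim I/\log(1/\delta)$, at the cost of only a single logarithmic factor.

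The crucial observation is that this pigeonholing upgrades $\mathcal{F}_m$ to a \emph{global} Katz-Tao set. Indeed, for any dyadic cube $Q$ of side length $r\in[\delta,1]$, using that each $\mathcal{F}(q)$ is Katz-Tao $(\delta,s,B)$,
\[
m\cdot|\mathcal{F}_m\cap Q|\;\leq\;\sum_{p\in\mathcal{F}_m\cap Q}m(p)\;=\;\sum_{q\in\mathcal{P}}|\mathcal{F}(q)\cap\mathcal{F}_m\cap Q|\;\leq\;B|\mathcal{P}|(r/\delta)^s,
\]
so $\mathcal{F}_m$ is a Katz-Tao $(\delta,s,B|\mathcal{P}|/m)$-set. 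Applying Corollary \ref{incidence11} to $\mathcal{P}$ (Katz-Tao $(\delta,t,A)$) and $\mathcal{F}_m$ with trivial weights $w_1\equiv w_2\equiv 1$, and using the trivial lower bound $\mathcal{I}_w(\mathcal{F}_m,\mathcal{T}(\mathcal{P}))\geq\sum_{p\in\mathcal{F}_m}m(p)\sim m|\mathcal{F}_m|$, yields
\[
(m|\mathcal{F}_m|)^{2}\;\lesssim_{\psi,s,t}\;\delta^{-1}AB|\mathcal{P}|^{2}|\mathcal{F}_m|/m,\qquad\text{i.e.}\qquad m^{3}|\mathcal{F}_m|\;\lesssim\;\delta^{-1}AB|\mathcal{P}|^{2}.
\]

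To finish, one interpolates this against the trivial $|\mathcal{F}_m|\leq|\mathcal{F}|$. A naive optimization of $m$ between the two inequalities produces only the Szemer\'edi--Trotter--type bound $I\lesssim(\delta^{-1}AB)^{1/3}(|\mathcal{F}||\mathcal{P}|)^{2/3}$, which is strictly weaker than the target $\sqrt{\delta^{-1}AB|\mathcal{F}||\mathcal{P}|}$ in the regime $|\mathcal{F}||\mathcal{P}|>\delta^{-1}AB$. I expect the main obstacle to lie precisely in closing this gap: in the Fu--Ren argument one performs a \emph{symmetric} pigeonholing also on the tube multiplicity $n(q):=|\mathcal{F}(q)|$, producing refined subsets $\mathcal{P}^{*}\subset\mathcal{P}$, $\mathcal{F}^{*}\subset\mathcal{F}$ on which both $n$ and $m$ are uniform and the counting identity $n^{*}|\mathcal{P}^{*}|\sim m^{*}|\mathcal{F}^{*}|$ holds. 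Feeding the resulting improved Katz-Tao conditions on both sides back into the $\delta^{-\epsilon}$-free Corollary \ref{incidence11}, and using the strict inequality $s+t<2$ (together with the coupling between the two pigeonhole scales) to absorb the cumulative logarithmic losses, should then yield the exponent $1/2$. The delicate combinatorial bookkeeping of these two pigeonhole parameters, which is exactly what distinguishes the Fu--Ren bound from the standard Szemer\'edi--Trotter exponent, is the step I expect to be the most technical in the curvilinear setting and the place where the adaptation of \cite[Theorem 5.2]{MR4751206} requires the most care.
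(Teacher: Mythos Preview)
Your pigeonholing argument is correct as far as it goes, but it genuinely stalls at the Szemer\'edi--Trotter exponent, and the ``symmetric pigeonholing'' you propose will not close the gap. The asymmetry in the hypotheses is essential: each $\mathcal{F}(q)$ is a Katz--Tao $(\delta,s,B)$-set, but there is no analogous control on the sets $\mathcal{P}(p):=\{q:p\in\mathcal{F}(q)\}$, so pigeonholing on $n(q)=|\mathcal{F}(q)|$ does not improve the Katz--Tao constant of any refinement $\mathcal{P}^{*}\subset\mathcal{P}$ beyond $A$. Running your computation with $\mathcal{P}^{*}$ in place of $\mathcal{P}$ still yields $m^{3}|\mathcal{F}_{m}|\lesssim\delta^{-1}AB|\mathcal{P}^{*}|^{2}\leq\delta^{-1}AB|\mathcal{P}|^{2}$, so nothing is gained. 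The role of the strict inequality $s+t<2$ is not to absorb logarithms (the paper's proof is entirely log-free); it is used only to guarantee $(s+1)+t<3$, the hypothesis of Corollary~\ref{incidence11}.

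The missing idea, which is exactly what Fu--Ren do and what the paper adapts, is not combinatorial pigeonholing but a \emph{geometric replacement}: one modifies $\mathcal{F}$ inside each ``over-concentrated'' dyadic cube $Q$ (those with $|\mathcal{F}\cap Q|\geq B(\ell(Q)/\delta)^{s+1}$) by substituting a Cantor-type configuration $\mathcal{F}_{\omega}\subset Q$ that (i) is globally a weighted Katz--Tao $(\delta,s+1,O(B))$-set and (ii) still captures $\gtrsim B^{-1}$ of the incidences of each tube $\Gamma_q(\delta)$ passing through $Q$ (this is the sublemma \eqref{estimate11}, which uses the monotonicity reduction on $\Gamma$). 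The point is that this upgrades the Katz--Tao \emph{exponent} of the cube family from $s$ to $s+1$ while keeping the constant at $O(B)$; applying Corollary~\ref{incidence11} with parameters $(s+1,t)$ then gives the square-root bound directly. Your approach trades the Katz--Tao \emph{constant} against multiplicity while leaving the exponent at $s$, and that trade is strictly lossier.
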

	\begin{proof}
		\textbf{Step 1: initial reduction.} 
		Since $\psi$ is strictly convex, $\Gamma$ can be divided into the decreasing part $\Gamma^-$ and the increasing part $\Gamma^+$. For each $q\in \mathcal{P}$, let $\Gamma_q^-(\delta)$ and $\Gamma_q^+(\delta)$ be the $\delta$-neighborhood of $\Gamma_q^-:=q+\Gamma^-$ and $\Gamma_q^+:=q+\Gamma^+$ respectively, then it is easy to see 
		\[\Gamma_q(\delta)=\Gamma_q^-(\delta)\cup \Gamma_q^+(\delta).\]
		Moreover, we define
		\[\mathcal{F}^-(q):=\{p\in \mathcal{F}(q): p\cap \Gamma_q^-(\delta)\neq \emptyset\},\quad \mathcal{F}^+(q):=\{p\in \mathcal{F}(q): p\cap \Gamma_q^+(\delta)\neq \emptyset\},\]
		then either $\sum_q|\mathcal{F}(q)|\sim \sum_q|\mathcal{F}^-(q)|$ or $\sum_q|\mathcal{F}(q)|\sim \sum_q|\mathcal{F}^+(q)|$. Without loss of generality, we may assume the latter case and denote $\Gamma_q^+(\delta)$ and $\mathcal{F}^+(q)$ still by $\Gamma_q(\delta)$ and $\mathcal{F}(q)$ respectively.
		
		Moreover, we divide $\mathcal{F}$ into four sub-families, say $\mathcal{F}_{i,j}$ with $i, j\in\{0,1 \}$, where $\mathcal{F}_{i,j}$ is the collection of $\delta$-cubes in $\mathcal{F}$ with upper-right vertex $(m\delta, n\delta)$ satisfying $m\equiv i, n\equiv j~(\text{mod}~2)$. By translating the configuration if necessary, we may assume that every dyadic $\delta$-cube in $\mathcal{F}$ has upper-right vertex in $(\delta (2\mathbb Z+1))^2$.
		
		\textbf{Step 2: constructing less concentrated pockets.} Fix a dyadic number $\omega \in [\delta, 1]$, we aim to construct a set of dyadic $\delta$-cubes $\mathcal{F}_\omega$ contained in $[0,\omega]^2$. Let $\mathcal{C}$ be a standard $s$-dimensional Cantor set on $[0,\omega]$ and let $\mathcal{C}(\delta)$ be its $\delta$-neighborhood. Define $\mathcal{F}_\omega$ as the set of dyadic $\delta$-cubes with upper-right vertices $(m\delta, n\delta)$, where $m ,n \in\mathbb Z$, $1\leq m, n \lesssim \tfrac{\omega}{\delta}$ and either $m\delta$ or $n\delta$ belong to $\mathcal{C}(\delta)$. An easy observation shows that $|\mathcal{F}_\omega|\lesssim (\tfrac{\omega}{\delta})^{s+1}$ and for any positive number $d\in [\delta, \omega]$ we have:
		\begin{equation}\label{form-a1}
			\left|\{m: m\delta \in \mathcal{C}(\delta)\cap [0, d]\}\right|\gtrsim (\tfrac{d}{\delta})^{s}.
		\end{equation}
		
		\textbf{Step 3: fixing over-concentrated pockets.} We want to replace the over-concentrated pockets in $\mathcal{F}$ by less concentrated pockets constructed in \textbf{Step 2} so that $\mathcal{F}$ will be replaced by a weighted Katz-Tao $(\delta, s+1, O(B))$-set $\mathcal{F}'$. Fix $\omega\in [\delta,1]\cap 2^{-\mathbb N}$. Let $\mathcal{W}$ be the set of cubes in the family $\bigcup_{\delta\leq\omega\leq1} \mathcal{D}_\omega$ that contain $\geq B (\tfrac{\omega}{\delta})^{1+s}$ cubes in $\mathcal{F}$. Let $\mathcal{R}$ be the maximal elements of $\mathcal{W}$, which means any $p\in\mathcal{R}$ cannot be contained in other cubes in $\mathcal{W}$. It is clear that $\mathcal{R}$ is a disjoint family of dyadic cubes. Before constructing $\mathcal{F}'$, we need to verify the following technical lemma.
		\begin{sublemma}
			Let $\mathcal{F}_\omega$ be the set constructed in \textbf{Step 2}. For any $q\in \mathcal{P}$, we have 
			\begin{equation}\label{estimate11}
				B|\{p\in \mathcal{F}_\omega: p\cap \Gamma_q(\delta)\neq \emptyset\}|\gtrsim_\psi |\mathcal{F}(q)\cap [0,\omega]^2|.
			\end{equation}
		\end{sublemma}
		\begin{proof}
			Fix $q\in \mathcal{P}$, we may assume that there exists $p\in \mathcal{F}(q)\cap [0,\omega]^2\neq \emptyset$. Let $d$ be the length of the projection of $\Gamma_q(\delta)\cap [0,\omega]^2$ onto $x$-axis. Note that this projection is a consecutive interval due to the reduction in \textbf{Step 1}. We also assume that $\Gamma_q(\delta)$ intersects the left or right edge of $[0,\omega]^2$. Otherwise, if $\Gamma_q(\delta)$ intersects the bottom or top edge of $[0,\omega]^2$ we can instead consider projection of $\Gamma_q(\delta)\cap [0,\omega]^2$ onto $y$-axis.
			
			If $d\geq\delta$, then $\Gamma_q(\delta)$ intersects at least one cube in $\mathcal{F}_\omega$ with upper-right vertex $(m\delta, n\delta)$ for each $m\delta\in \mathcal{C}(\delta)\cap [0,d]$ (if $\Gamma_q(\delta)$ intersects the left edge) or $m\delta\in \mathcal{C}(\delta)\cap [\omega-d,\omega]$ (if $\Gamma_q(\delta)$ intersects the right edge). Thus we deduce by \eqref{form-a1}
			\[|\{p\in \mathcal{F}_\omega: p\cap \Gamma_q(\delta)\neq \emptyset\}|\gtrsim (\tfrac{d}{\delta})^s.\]
			If $d<\delta$ and $\Gamma_q(\delta)$ intersects the left edge of $[0,\omega]^2$, we claim that the upper-right vertex $(x_p,y_p)$ of $p$ equals $(\delta, n\delta)$. Otherwise $x_p\geq 3\delta$ since $(x_p,y_p)$ lies in $(\delta(2\mathbb Z+1))^2$. But this will cause $d>2\delta$, which is a contradiction. If $\Gamma_q(\delta)$ intersects the right edge of $[0,\omega]^2$, the same argument shows that $(x_p,y_p)$ equals $(\omega-\delta, n\delta)$ where $\omega-\delta\in \mathcal{C}(\delta)\cap \delta \mathbb Z$. Combining the two cases gives
			\[|\{p\in \mathcal{F}_\omega: p\cap \Gamma_q(\delta)\neq \emptyset\}|\gtrsim \max\{1, (\tfrac{d}{\delta})^s\}.\]
			On the other hand, by assumption \eqref{curvature assumption} and simple geometric argument, we see that $\Gamma_q(\delta)\cap [0,\omega]^2$ is contained in a ball with radius $\sim_\psi d$ and $\mathcal{F}(q)\cap [0,\omega]^2$ is contained in a ball with radius $\sim_\psi (d+\delta)$. Since $\mathcal{F}(q)$ is a Katz-Tao $(\delta, s, B)$-set, we infer
			\[|\mathcal{F}(q)\cap [0,\omega]^2|\lesssim_\psi B(\tfrac{d+\delta}{\delta})^s\lesssim B\max\{1, (\tfrac{d}{\delta})^s\}\lesssim B|\{p\in \mathcal{F}_\omega: p\cap \Gamma_q(\delta)\neq \emptyset\}|.\]
		\end{proof}
		We are now ready to construct $\mathcal{F}'$. For each $Q\in \mathcal{R}\cap\mathcal{D}_\omega$, let $\mathcal{F}'(Q)$ be a translation of $\mathcal{F}_\omega$ placed in $Q$, then define
		\[\mathcal{F}':=\{p\in \mathcal{F}: p\not\subset \cup \mathcal{R}\}\cup \bigsqcup_{Q\in \mathcal{R}}\mathcal{F}'(Q).\]
		Next, we associate $\mathcal{F}'$ with a weight function 
		\begin{equation*}
			w(p):=\begin{cases}
				B, & ~~\text{if}~p\in \bigsqcup_{Q\in \mathcal{R}}\mathcal{F}'(Q),\\
				1, & ~~\text{if}~p\in\{p\in \mathcal{F}: p\not\subset \cup \mathcal{R}\}.
			\end{cases}
		\end{equation*}
		Let $\mathcal{T}:=\{\Gamma_q(\delta): q\in\mathcal{P}\}$. Then the following properties are easy to verify:
		\begin{itemize}
			\item[(P1)] \phantomsection \label{P1} $\sum_{p\in\mathcal{F}'} w(p)\lesssim |\mathcal{F}|$ and $\sum_{q\in\mathcal{P}}|\mathcal{F}(q)|\lesssim \mathcal{I}_w(\mathcal{F}',\mathcal{T})$;
			\item[(P2)] \phantomsection \label{P2} $\mathcal{F}'$ is a weighted Katz-Tao $(\delta, s+1, O(B))$-set.
		\end{itemize}
		Indeed, note \eqref{estimate11} also holds if $[0,\omega]^2$ is replaced by any $Q\in\mathcal{D}_\omega$ and $\mathcal{F}_\omega$ is replaced by $\mathcal{F}'(Q)$, then for any $q\in\mathcal{P}$ and $Q\in \mathcal{R}$ we have
		\[\sum_{p\in \mathcal{F}'(Q)\cap \Gamma_q(\delta)}w(p)=B|\{p\in \mathcal{F}'(Q): p\cap \Gamma_q(\delta)\neq \emptyset\}|\gtrsim_\psi |\mathcal{F}(q)\cap Q|,\]
		which implies $\sum_{q\in\mathcal{P}}|\mathcal{F}(q)|\lesssim \mathcal{I}_w(\mathcal{F}',\mathcal{T})$. Moreover, from \textbf{Step 2} we know $\sum_{p\in \mathcal{F}'(Q)}w(p)\lesssim B(\tfrac{\omega}{\delta})^{1+s}\lesssim|\mathcal{F}\cap Q|$ for each $Q\in\mathcal{R}$, thus $\sum_{p\in\mathcal{F}'} w(p)\lesssim |\mathcal{F}|$ and proves \nref{P1}. Property \nref{P2} is clear by definition of $\mathcal{F}'$. Finally, we apply Corollary \ref{incidence11} to get (recall $s+1+t<3$)
		\[\sum_{q\in\mathcal{P}}|\mathcal{F}(q)|\lesssim \mathcal{I}_w(\mathcal{F}',\mathcal{T})\lesssim_{\psi,s,t} \sqrt{\delta^{-1}AB|\mathcal{T}|\sum_{p\in\mathcal{F}'} w(p)}\lesssim\sqrt{\delta^{-1}AB|\mathcal{P}||\mathcal{F}|}.\]
	\end{proof}

	\bibliographystyle{plain}
	\bibliography{references}
	
\end{document}